\numberwithin{equation}{section}
\newcommand{\modd}[1]{|{\rm d}#1|}
\newcommand{\Deriv}{{\rm D}}
\def\swappedhead#1#2#3{%
  \thmnumber{\@upn{\the\thm@headfont #2\@ifnotempty{#1}{.~}}}%
  \thmname{#1}%
  \thmnote{ {\the\thm@notefont(#3)}}}
 \newtheoremstyle{changebreak}
   {9pt}
   {9pt}
   {\itshape}
   {}
   {\bfseries}
   {.}
   {\newline}
   {}
 \newtheoremstyle{defnbreak}
   {9pt}
   {9pt}
   {\normalfont}
   {}
   {\bfseries}
   {.}
   {\newline}
   {}
 \newtheoremstyle{claimstyle}%
   {}
   {}
   {\normalfont}
   {}
   {\itshape}
   {.}
   { }
   {\thmnote{#3}}
\newenvironment{subproof}{\begin{proof}}{\end{proof}}
\theoremstyle{claimstyle}
\newtheorem*{varclaim}{}
\newenvironment{claim}[1][Claim]{\begin{varclaim}[#1]}{\end{varclaim}}
\newenvironment{remark}[1][Remark]{\begin{varclaim}[#1]}{\end{varclaim}}
\theoremstyle{changebreak}
\newtheorem{thm}{Theorem}[section]%
\newtheorem{lem}[thm]{Lemma}%
\newtheorem{cor}[thm]{Corollary}%
\newtheorem{prop}[thm]{Proposition}%
\newtheorem{obs}[thm]{Observation}%
\newtheorem{thmdef}[thm]{Theorem and Definition}%
\newtheorem{theo}[thm]{Theorem}
\newtheorem*{theoA}{Theorem A}
\newtheorem*{propB1}{B.1. Proposition}
\newtheorem*{propB2}{B.2. Proposition}
\newtheorem*{propB3}{B.3. Proposition}
\theoremstyle{defnbreak}
\newtheorem{rmk}[thm]{Remark}%
\newtheorem{defn}[thm]{Definition}%
\newcommand*{\defeq}{\mathrel{\vcenter{\baselineskip0.5ex \lineskiplimit0pt
                     \hbox{\scriptsize.}\hbox{\scriptsize.}}}%
                     =}
\newcommand{\N}{\mathbb{N}}
\newcommand{\C}{\ensuremath{\mathbb{C}}}
\newcommand{\Ch}{\hat{\mathbb{C}}}
\newcommand{\D}{\mathbb{D}}
\renewcommand{\theta}{\vartheta}
\renewcommand{\phi}{\varphi}
\newcommand{\dist}{\operatorname{dist}}
\newcommand{\diam}{\operatorname{diam}}
\newcommand{\interior}{\operatorname{int}}
\def \isnatural {\in\mathbb{N}}
\def \disc{\mathbb{D}}
\def \sphere{\widehat{\mathbb{C}}}
\newcommand{\tef}{transcendental entire function}
\newcommand\qfor{\quad\text{for }}
\newcommand{\classb}{\mathcal{B}}
\newcommand{\B}{\classb}
\def\blfootnote{\xdef\@thefnmark{}\@footnotetext}
\begin{document}
\title[Class $\mathcal{B}$ or not class $\mathcal{B}$?]{Hyperbolic entire functions and the Eremenko-Lyubich class: \\
    Class $\mathcal{B}$ or not class $\mathcal{B}$?}
\author{Lasse Rempe-Gillen} 
\address{Dept. of Mathematical Sciences \\
	 University of Liverpool \\
   Liverpool L69 7ZL\\
   UK \\ ORCiD: 0000-0001-8032-8580}
\email{l.rempe@liverpool.ac.uk}
\author{Dave Sixsmith}
\address{Department of Mathematics and Statistics \\
	 The Open University \\
   Walton Hall\\
   Milton Keynes MK7 6AA\\
   UK \\
   ORCiD: 0000-0002-3543-6969}
\email{david.sixsmith@open.ac.uk}
\subjclass[2010]{Primary 37F10; Secondary 30D05, 30D15, 30D20, 37F15}
\thanks{The first author was supported by a Philip Leverhulme Prize. The second author was supported by Engineering and Physical Sciences Research Council grant EP/J022160/1.}

\dedicatory{To Alex Eremenko on the occasion of his 60th birthday}
%
%
%
%
\begin{abstract}
 Hyperbolicity plays an important role in the study of dynamical systems, and is a key concept in the iteration of rational functions of one complex variable.
  Hyperbolic systems have also been considered in the study of transcendental entire functions. There does not appear to be
  an agreed definition of the concept in this context, due to complications arising from the non-compactness of the phase space. 
  
 In this article, we consider a natural definition of hyperbolicity that requires expanding properties on 
  the preimage of a punctured neighbourhood of the isolated singularity.
  We show that this definition is equivalent to another commonly used one: a {\tef} is hyperbolic if and only if its postsingular set is a compact subset of
  the Fatou set. This leads us to propose that this notion should be used as the general definition of hyperbolicity in the context of entire functions, 
  and, in particular,
  that speaking about hyperbolicity makes sense only within the \emph{Eremenko-Lyubich class} $\classb$ of {\tef}s with a bounded set of singular values.

  We also considerably strengthen a recent characterisation of the class $\classb$, by showing that functions outside of this class cannot be
    expanding with respect to a metric whose density decays at most polynomially. In particular, this implies that
    no transcendental entire function can be expanding with respect to the spherical metric. 
  Finally we give a characterisation of an analogous class of functions analytic in a hyperbolic domain.
\end{abstract}
\maketitle
%
%
%
%

\epigraph{Class $\B$ or not class $\B$, that is the question---\\
  Whether 'tis nobler in the mind to suffer\\ 
   The tracts that lie over unbounded values, \\
  Or to take arms against these spots of trouble, \\
   And by assumption, ban them?}{(loosely based on \emph{Hamlet})}
\section{Introduction}

It is a general principle in the investigation of dynamical systems that \emph{hyperbolic systems}
   (also known as ``Axiom A'', following Smale \cite{smaledynamics}) are the first class to investigate in a given setting: they exhibit the simplest behaviour, yet their study frequently leads to a better understanding of more complicated systems.
   
   In this article, we consider (non-invertible) dynamics
   in one complex variable. For rational maps, hyperbolic behaviour was understood, in general
   terms, already by Fatou \cite[pp.\ 72--73]{fatoumemoir2}, though, of course, he did not use this terminology.

  More precisely, a rational map $f\colon\Ch\to\Ch$ is said to be \emph{hyperbolic} if one of the
   following, equivalent, conditions holds \cite[Section~9.7]{beardon} (see below for definitions):
  \begin{enumerate}[(a)]
    \item the function $f$ is \emph{expanding} with respect to a suitable conformal metric
     defined on a neighbourhood of its Julia set; 
   \item every critical value of $f$ belongs to the basin of an attracting periodic cycle;
   \item the \emph{postsingular set} is a subset of the Fatou set.
  \end{enumerate}
  Moreover \cite[Theorem~4.4]{mcmullenrenormalization} every hyperbolic rational map satisfies
  \begin{enumerate}[(d)]
    \item $f$ is \emph{stable}; in other words, any nearby rational map is topologically conjugate to $f$ on its Julia set, with the conjugacy 
     depending continuously on the perturbation.\label{item:stabledef}
  \end{enumerate}

   The famous \emph{Hyperbolicity Conjecture} asserts that condition \ref{item:stabledef} is also equivalent to
    hyperbolicity; this question essentially goes back to Fatou. See
    the final sentence of Chapitre IV in \cite[p.\ 73]{fatoumemoir2}, and also compare \cite[Section~4.1]{mcmullenrenormalization} for
    a historical discussion. 

  The iteration of transcendental entire functions $f\colon\C\to\C$ also goes back to Fatou \cite{fatou}, and
    has received considerable attention in recent years. As in the rational case, the \emph{Fatou set} $F(f)\subset\C$ 
    of such a function is defined as the 
    set of $z\in\C$ such that 
    the iterates $\{f^n\}_{n\isnatural}$ form a normal family
    in a neighbourhood of $z$. Its complement, the 
    \emph{Julia set} $J(f)\defeq \C\setminus F(f)$, is the set where the dynamics is ``chaotic''. The role played by the set of critical values in rational dynamics
    is now taken on by the set $S(f)$ of (finite) \emph{singular values}, i.e.\ the closure of the set of critical and asymptotic values of $f$
    (see Section \ref{sec:singularvalues}). 

 In the transcendental setting~-- due to the 
    effect of the non-compactness of the phase space and the essential singularity at infinity~-- it is 
    not clear how ``hyperbolicity'' should be defined.  Accordingly, there is currently no accepted
    general definition; see Appendix A for a brief historical discussion. We propose
    the following notion of ``expansion'' in this setting.
    
 \begin{defn}[Expanding entire functions]\label{defn:expanding}
   A transcendental entire function $f$ is \emph{expanding} if there exist
     a connected open set $W\subset\C$, which contains $J(f)$, and a conformal metric $\rho=\rho(z)\modd{z}$ 
    on $W$ such that:
    \begin{enumerate}[(1)]
      \item $W$ contains a punctured neighbourhood of infinity, i.e.\ 
          there exists $R>0$ such that $z\in W$ whenever $|z|>R$;
      \item $f$ is expanding with respect to the metric $\rho$, i.e.\ there exists
       $\lambda>1$ such that 
          \[ \|Df(z)\|_{\rho} \defeq |f'(z)|\cdot \frac{\rho(f(z))}{\rho(z)} \geq \lambda \]
       whenever $z,f(z)\in W$; and
      \item the metric $\rho$ is complete at infinity, i.e. $\dist_\rho(z, \infty) = \infty$ whenever $z \in W$. \label{completecond}
    \end{enumerate}
 \end{defn}

 \begin{figure}
  \subfloat[$J(f_1)$]{\includegraphics[width=.45\textwidth]{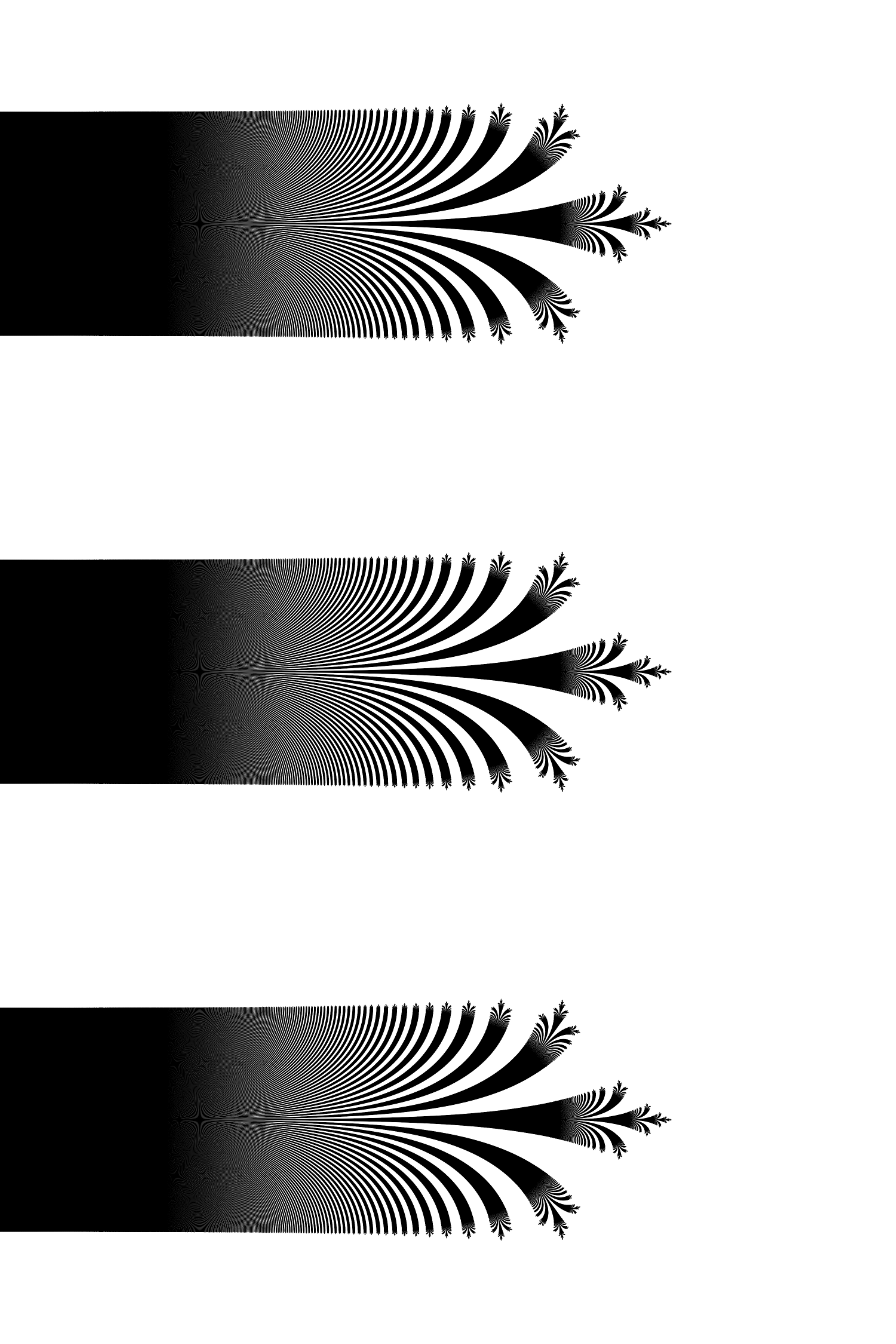}}\hfill
  \subfloat[$J(f_2)=J(f_3)$]{\includegraphics[width=.45\textwidth]{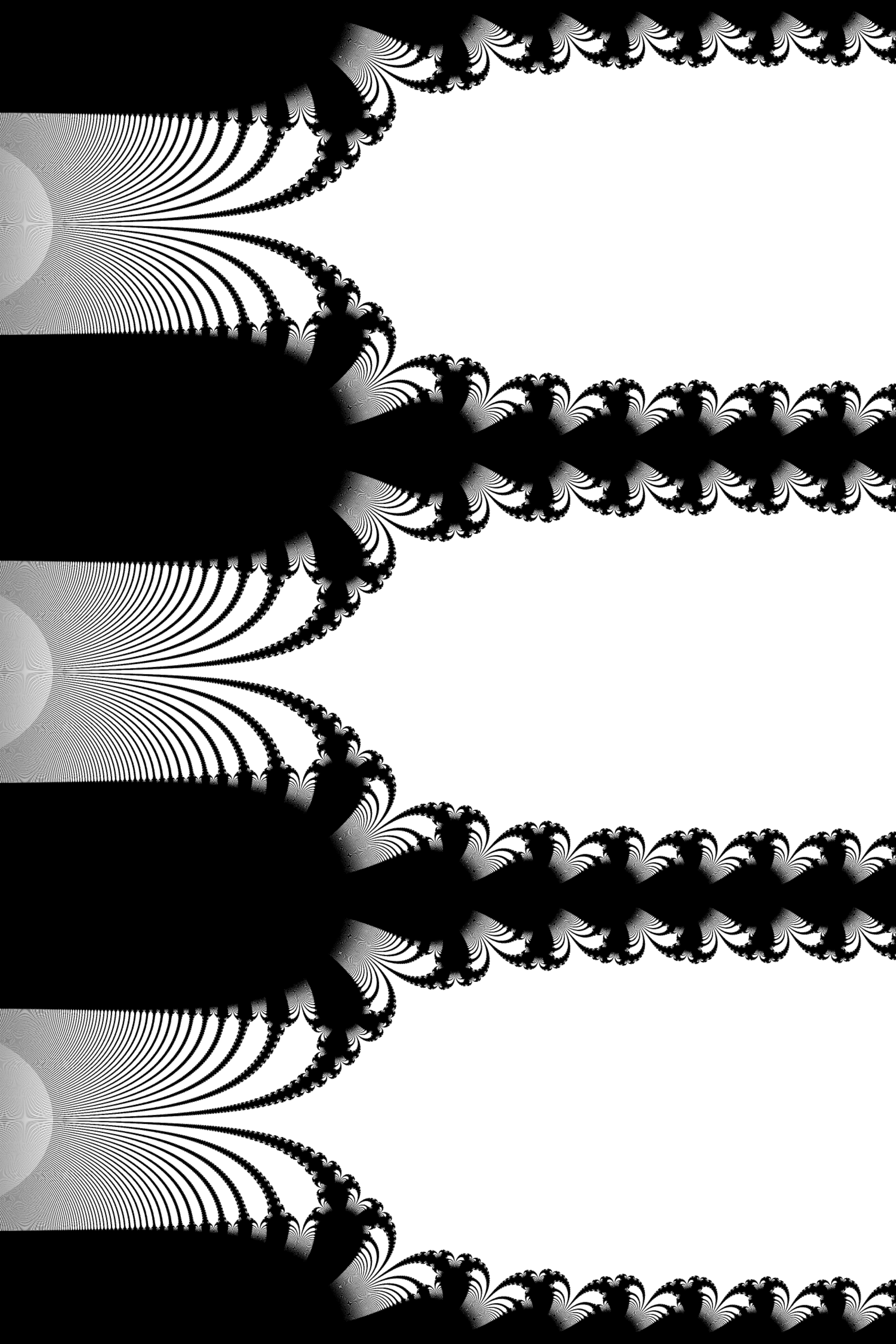}}
   \caption{\label{fig:expanding}Entire functions that are expanding on a neighbourhood of the Julia set, but 
      should not be considered hyperbolic. 
      The functions
       $f_1(z) = z+1+e^{-z}$, $f_2(z)=z-1+e^{-z}$ 
       and $f_3(z) = z-1 + 2\pi i + e^{-z}$  can be seen to be expanding on neighbourhoods (in $\C$) of
       their Julia sets (shown in black).
      $f_1$ has a Baker domain, $f_2$ has infinitely many superattracting fixed points and $f_3$ has wandering domains. None of the functions
       $f_p$ is stable under perturbations within the family $z\mapsto \lambda f_p$, $\lambda\in\C$. See Appendix~B for details, 
       and compare also Figure~\ref{fig:perturbation}.}
 \end{figure}

 We make three remarks about this definition.
 First we note that the final condition holds whenever the metric $\rho$ is complete. Indeed, as we shall see below,
 we could require that $\rho$ is complete without changing the class of expanding functions. We prefer instead to use the weaker condition \ref{completecond},
 which allows $\rho$ to be the Euclidean metric.
 
 Second, one might ask, in analogy to hyperbolicity for rational maps,
      for the metric $\rho$ to be defined and conformal on a full (rather than punctured) neighbourhood of 
     $\infty$. However, due to the nature of the essential singularity at infinity, such 
     expansion can \emph{never} be satisfied; see Corollary~\ref{cor:sphericalexpansion} below.
 
 Finally, one might require only expansion on a neighbourhood of the Julia set, as a subset of the complex plane. This is too weak
     a condition. 
  Functions with this property may have infinitely many attractors, wandering domains or \emph{Baker domains}: invariant domains of normality in which all orbits converge to infinity. (Compare Figure \ref{fig:expanding}.) 
    Such behaviour is not compatible with the usual picture of hyperbolicity and, moreover, usually not stable even under
   simple perturbations.

 Our key observation about expanding entire functions is the following: 
 \begin{thm}[Expansion only in the Eremenko-Lyubich class]\label{thm:hyperbolicity}
    If $f$ is expanding in the sense of Definition \ref{defn:expanding}, then $S(f)$ is bounded.
 \end{thm}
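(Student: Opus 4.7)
The plan is to argue by contradiction via the decomposition of $S(f)$ as the closure of the critical values union the asymptotic values, showing that each of these sets must be bounded. Critical values are easy: if $(c_n)$ are critical points with $|f(c_n)|\to\infty$, then continuity of $f$ prevents $(c_n)$ from having a bounded subsequence, so $c_n \to \infty$; for $n$ large both $c_n$ and $f(c_n)$ lie in $W$, and Definition~\ref{defn:expanding}(2) demands
\[
\|Df(c_n)\|_\rho = |f'(c_n)| \cdot \frac{\rho(f(c_n))}{\rho(c_n)} \geq \lambda > 0,
\]
which is impossible since $f'(c_n)=0$.

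For asymptotic values the approach is to produce, for any asymptotic value $v$ of sufficiently large modulus, a curve $\tilde\ell\colon [0,1) \to \C$ with $\tilde\ell(s)\to\infty$ as $s\to 1^-$ such that $f\circ\tilde\ell$ has finite Euclidean length in $W$. Assuming $\tilde\ell$ in hand, the argument concludes as follows. The image $f\circ\tilde\ell$ lies in a compact subset of $W$, so its $\rho$-length is finite. On the other hand, for $s$ near $1$ we have $\tilde\ell(s) \in W$ (as $W$ contains a neighbourhood of infinity), and integrating the pointwise expansion of Definition~\ref{defn:expanding}(2) along $\tilde\ell$ shows that the $\rho$-length of $f\circ\tilde\ell|_{[s_0,s]}$ is at least $\lambda$ times the $\rho$-length of $\tilde\ell|_{[s_0,s]}$; the latter is bounded below by $\dist_\rho(\tilde\ell(s_0),\tilde\ell(s))$, which tends to $\infty$ as $s\to 1^-$ by Definition~\ref{defn:expanding}(3). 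This contradicts the finiteness of the image length, so asymptotic values are bounded and hence $S(f)$ is bounded.

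It remains to build $\tilde\ell$. Fix $\eps>0$ so small that $\overline{B(v,\eps)}\subset W$ and $\overline{B(v,\eps)}$ is disjoint from the (now bounded) set of critical values, and let $U$ be the connected component of $f^{-1}(B(v,\eps))$ containing a tail of an asymptotic path for $v$. Then $U$ is unbounded and, since $B(v,\eps)$ contains no critical values, $f|_U$ is a local biholomorphism. The candidate $\tilde\ell$ is the $f$-lift, starting from a suitable point of $U$, of a short radial segment in $B(v,\eps)$ ending at $v$. The main technical obstacle is to ensure that this lift escapes to infinity rather than terminating at a preimage of $v$ inside $U$: in the clean case of a logarithmic singularity, where $U$ is simply connected and $f|_U\colon U\to B(v,\eps)\setminus\{v\}$ is a universal cover, the desired lift is obtained by selecting the sheet corresponding to the asymptotic path; in general one exploits the existence of an asymptotic end of $U$ over $v$, perturbing the segment within $B(v,\eps)$ to avoid the discrete set $f^{-1}(v)\cap U$ if necessary.
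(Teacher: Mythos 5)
Your treatment of critical values, and the length/expansion contradiction you run once a suitable curve $\tilde\ell$ is in hand, are both correct and essentially reproduce the mechanism of the paper's Proposition~\ref{prop:contraction}: an asymptotic curve of infinite $\rho$-length whose image is a rectifiable arc in a compact subset of $W$ is incompatible with uniform expansion together with completeness at infinity. The gap is precisely in the construction of $\tilde\ell$, which you flag as ``the main technical obstacle'' but do not resolve. Lifting a radial segment of $B(v,\eps)$ \emph{ending at $v$}, starting from a point on the asymptotic path, can perfectly well terminate at a finite point of $U$ lying over $v$: when $v$ is an indirect singularity (think of the behaviour of $\sin z/z$ over $0$), every such lift, from every starting point on the path, may end at an ordinary preimage of $v$, and there is no ``sheet corresponding to the asymptotic path'' — that picture is only valid for logarithmic singularities, where $f\colon U\to B(v,\eps)\setminus\{v\}$ is a universal covering. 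Your proposed repair, perturbing the segment to avoid the discrete set $f^{-1}(v)\cap U$, does not address this: interior points of the segment are not equal to $v$, so their lifts never meet $f^{-1}(v)$ anyway, while the obstruction sits at the \emph{terminal} point, which you cannot perturb away if you insist on reaching $v$. Moreover ``an asymptotic end of $U$ over $v$'' is not a well-defined object in the general (non-isolated, indirect) case, so the general-case argument as stated would fail.

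The fix is to abandon the requirement that the asymptotic value reached be $v$ itself. This is what the paper's Lemma~\ref{lem:inversebranches} (essentially a classical fact) provides: since $f\colon U\to B(v,\eps)$ is not a conformal isomorphism (if it were, the asymptotic path would converge to the finite point $(f|_U)^{-1}(v)$) and $B(v,\eps)$ contains no critical values, one continues an inverse branch of $f|_U$ over a maximal disc $D\subset B(v,\eps)$; the continuation must fail at some boundary point $s\in\partial D$, and absence of critical points forces the branch to tend to infinity along the radius ending at $s$. This yields an asymptotic curve in $U$ mapped one-to-one onto a straight segment contained in $B(v,\eps)$ — with asymptotic value $s$, possibly different from $v$ — and that is all your length argument needs. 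With that lemma substituted for your lifting construction, your proof becomes the paper's.
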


 In other words, every expanding function belongs to the \emph{Eremenko-Lyubich class} 
     \[ \B \defeq \{f\colon\C\to\C \text{ \normalfont transcendental entire}\colon \text{$S(f)$ \normalfont is bounded}\}. \]
  Introduced in \cite{alexmisha}, $\B$ is a large and well-studied class.
  For functions $f\in\B$ 
   there is a natural and well-established notion of hyperbolicity.
  Indeed, already McMullen~\cite[Section 6]{mcmullenarea} suggested calling an entire function 
    $f$ ``expanding'' if the postsingular set $P(f)\subset S(f)$ is a compact subset of the Fatou set
     (see condition~\ref{item:postsingularinF} in Theorem~\ref{thmdef:hyperbolicity} below). 
   The expansion property he established for such a function $f$~\cite[Proposition 6.1]{mcmullenarea} 
    is non-uniform, but
    Rippon and Stallard~\cite[Theorem~C]{ripponstallardhyperbolic} established that 
    Definition~\ref{defn:expanding} holds for $f^n$, with $\rho$ the cylinder metric, 
    and $n$ sufficiently large. In
    \cite[Lemma~5.1]{boettcher}, it is shown that $f$ itself satisfies Definition~\ref{defn:expanding} with respect
    to a
    suitable hyperbolic
    metric.

(Note  that the expanding property is stated in \cite[Theorem~C]{ripponstallardhyperbolic} 
     only for points in the Julia set, but holds also on the preimage of a 
    neighbourhood of $\infty$ by  a well-known estimate of Eremenko and Lyubich;
    see~\eqref{eqn:eremenkolyubichexpansion} below or \cite[Lemma 2.2]{ripponstallardhyperbolic}.
    We also remark that 
    Rippon and Stallard treat the more general case of transcendental \emph{meromorphic} functions.
    While we restrict here to entire functions, where Definition~\ref{defn:expanding} seems
    particularly natural, our methods apply equally to the meromorphic case; compare 
    Theorem~\ref{thm:shrinkinginfinity} and the discussion in Appendix~A.)

Rippon and Stallard \cite[p.~3253]{ripponstallardhyperbolic} 
    mention that~-- while it is not clear what the definition of a hyperbolic transcendental entire (or meromorphic)
    function should be~-- it is natural
    to call the above functions ``hyperbolic'' in view of their expansion properties. Our results suggest
    that, conversely, only these entire functions 
   should be classed as hyperbolic.

\begin{thmdef}[Hyperbolic entire functions]\label{thmdef:hyperbolicity}
  All of the following properties of a transcendental entire function $f\colon\C\to\C$ are equivalent. If any, and hence all, of these conditions hold, then $f$ is called \emph{hyperbolic}.
 \begin{enumerate}[(a)]
   \item $f$ is expanding in the sense of Definition \ref{defn:expanding};\label{item:expanding}
   \item $f$ is expanding in the sense of Definition~\ref{defn:expanding}, and  (for suitably chosen $W$) 
      the metric $\rho$ can be chosen to be the
     hyperbolic metric on $W$;\label{item:expandinghyperbolic}
   \item $f\in\B$ and every singular value belongs to the basin of an attracting periodic cycle;\label{item:singularattracting}
   \item the \emph{postsingular set}
           \[ P(f) \defeq \overline{\bigcup_{n\geq 0} f^n(S(f))} \]
          is a compact subset of $F(f)$.\label{item:postsingularinF}
 \end{enumerate}
   Moreover, each hyperbolic function is stable within its quasiconformal equivalence class.
\end{thmdef}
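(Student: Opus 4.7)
The plan is to prove the chain of implications (b) $\Rightarrow$ (a) $\Rightarrow$ (c) $\Leftrightarrow$ (d) $\Rightarrow$ (b), and then address the stability assertion separately. The first implication is immediate. For (a) $\Rightarrow$ (c), Theorem~\ref{thm:hyperbolicity} already places $f$ in $\B$, so only the attracting nature of the singular values remains to be established. Uniform $\rho$-expansion with factor $\lambda>1$ on a neighbourhood of $J(f)$ precludes parabolic periodic cycles and the boundaries of Siegel discs or Herman rings: each would give a bounded invariant subset of $J(f)$ on which $\|Df^n\|_\rho \geq \lambda^n$, and Koebe-type distortion (using conformality of $\rho$ on $W$) yields a contradiction. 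Since $f\in\B$ excludes Baker domains (Eremenko-Lyubich), and the same expansion argument rules out wandering domains (whose $\omega$-limit sets sit in a bounded invariant subset of $J(f)$ because $f\in\B$), every periodic Fatou component is an attracting basin, and each such cycle absorbs a singular value.

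The equivalence (c) $\Leftrightarrow$ (d) is essentially classical. Under (c), the orbit of every singular value eventually enters a compact forward-invariant trap around its attracting cycle, so $P(f)$ is compact and contained in the union of attracting basins, which lies in $F(f)$. Conversely, $P(f)$ compact in $F(f)$ forces $S(f)$ bounded (so $f\in\B$), and since the boundary of any non-attracting cycle or rotation domain is accumulated by the forward orbit of a singular value, every cycle must be attracting.

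The heart of the proof is (d) $\Rightarrow$ (b). Set $W \defeq \C\setminus P(f)$; this is connected, open, contains $J(f)$ and a punctured neighbourhood of infinity, and carries a hyperbolic metric $\rho_W$ for which $\dist_{\rho_W}(z,\infty)=\infty$ for every $z\in W$, giving condition~(3) of Definition~\ref{defn:expanding}. Because $S(f)\subset P(f)$ and $P(f)$ is forward invariant, $f\colon f^{-1}(W)\to W$ is an unramified covering, so Pick's theorem yields $\|Df(z)\|_{\rho_W}> 1$ for every $z\in f^{-1}(W)$. The delicate point is to promote this strict pointwise inequality to a uniform lower bound $\lambda>1$. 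The strategy is to combine a compactness argument on the bounded portion of $J(f)$ (which stays at positive $\rho_W$-distance from $P(f)$) with the Eremenko-Lyubich logarithmic derivative estimate on the preimage of a neighbourhood of infinity, following \cite[Theorem~C]{ripponstallardhyperbolic} and \cite[Lemma~5.1]{boettcher}.

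For the stability assertion, given $g$ quasiconformally equivalent to $f$ and sufficiently close within the equivalence class, the attracting cycles of $f$ persist for $g$, the postsingular set admits a holomorphic motion over a neighbourhood of $P(f)$, and a $\lambda$-lemma argument extends this motion to a quasiconformal conjugacy on $J(f)$; this is a routine adaptation of the rational-map argument to class $\B$. The main obstacle throughout is the promotion from pointwise Pick contraction to uniform expansion in (d) $\Rightarrow$ (b); once this is secured, the remainder is a systematic assembly of standard Fatou-component classification and singular-value bookkeeping.
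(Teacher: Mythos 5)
Your overall architecture matches the paper's (Theorem~\ref{thm:hyperbolicity} for membership in $\B$, Fatou-component bookkeeping for \ref{item:singularattracting}, the classical equivalence \ref{item:singularattracting}$\Leftrightarrow$\ref{item:postsingularinF}, and \cite[Lemma~5.1]{boettcher} for the hyperbolic-metric expansion), but two of your steps have genuine gaps. The more serious one is \ref{item:postsingularinF}$\Rightarrow$\ref{item:expandinghyperbolic} with $W\defeq\C\setminus P(f)$: for this domain the statement you are trying to prove is \emph{false} in general, so no amount of ``promotion'' from Pick's pointwise inequality can succeed. The attracting cycles themselves lie in $P(f)$ (each attracts a singular orbit), and points $z$ near a cycle point $p$ satisfy $z,f(z)\in W$, so they are subject to the expansion requirement of Definition~\ref{defn:expanding}. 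If $p$ is a fixed point isolated in $P(f)$, then $\rho_W(w)\sim 1/\bigl(|w-p|\log(1/|w-p|)\bigr)$ as $w\to p$, and a direct computation gives
\[ \|Df(z)\|_{\rho_W}=|f'(z)|\,\frac{\rho_W(f(z))}{\rho_W(z)}\;\longrightarrow\;1 \qquad (z\to p), \]
both in the attracting and in the superattracting case; when $p$ is accumulated by a singular orbit the rescaled limit configuration of punctures is invariant under the linearisation, which is therefore a hyperbolic isometry of the limit model, and again $\|Df\|_{\rho_W}$ comes arbitrarily close to $1$ near the cycle. This is precisely why item~\ref{item:expandinghyperbolic} says ``for suitably chosen $W$'' and why \cite[Lemma~5.1]{boettcher} works not with $\C\setminus P(f)$ but with $W=\C\setminus\overline{D}$, where $D\supset P(f)$ is an open set compactly contained in $F(f)$ with $f(\overline D)\subset D$ (such $D$ exists by \ref{item:singularattracting}/\ref{item:postsingularinF}, cf.\ \cite[Proposition~2.1]{walternurialasse}); removing this collar is what makes uniform expansion possible. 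The paper does not reprove this implication but cites \cite[Lemma~5.1]{boettcher}; you may do the same, but your sketched construction is not the one that works.

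The second gap is in \ref{item:expanding}$\Rightarrow$\ref{item:singularattracting}. Theorem~\ref{thm:hyperbolicity} only gives that $S(f)$ is bounded; what the argument actually needs, and what the paper establishes first from Proposition~\ref{prop:contraction}, is that (after shrinking) $W\cap S(f)=\emptyset$, so that \emph{every} branch of $f^{-n}$ exists on small discs centred on $J(f)$ and the expansion keeps the pullbacks small; this yields $J(f)\cap P(f)=\emptyset$. Your shortcut ``uniform expansion plus Koebe-type distortion'' for excluding Siegel boundaries and wandering domains silently presupposes exactly this: Koebe needs univalent branches on discs of definite size, and the hypothesis $\|Df\|_\rho\ge\lambda$ controls derivatives at points whose images lie in $W$, not diameters or distances, unless curves can be lifted, i.e.\ unless singular values are absent from $W$. (Parabolic cycles are indeed killed by the bare multiplier computation.) Your wandering-domain step is also misquoted: $f\in\B$ does not place the limit sets in a bounded invariant subset of $J(f)$; the relevant fact \cite{limitfunctions} is that the constant limits lie in $(J(f)\cap P(f))\cup\{\infty\}$, so one needs $J(f)\cap P(f)=\emptyset$ first and then \cite[Theorem~1]{alexmisha} to exclude the limit $\infty$ --- which is the paper's route. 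Finally, for stability the intended mechanism (Proposition~\ref{prop:stability}) is openness of hyperbolicity plus a holomorphic motion of the \emph{repelling periodic points} and the $\lambda$-lemma; a holomorphic motion of the postsingular set, as you propose, is neither obviously available nor what produces the conjugacy on $J(f)$.
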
 
\begin{remark}
  Quasiconformal equivalence classes form the natural parameter spaces of transcendental entire functions. 
    (See Proposition~\ref{prop:stability} for the formal meaning of the final statement in the theorem.) These classes were defined implicitly by 
    Eremenko and Lyubich \cite{alexmisha}, and explicitly by
    the second author~\cite{boettcher}, and provide the appropriate
    context in which to consider stability.
\end{remark}

\subsection*{Expansion near infinity} It is well-known that functions in the Eremenko-Lyubich class have strong expansion properties near
  $\infty$. Indeed, if $f\in\B$, then Eremenko and Lyubich \cite[Lemma 1]{alexmisha} proved that 
\begin{equation}\label{eqn:eremenkolyubichexpansion}
     \left|z \frac{f'(z)}{f(z)}\right| \to \infty \qquad\text{as}\qquad f(z)\to\infty. 
\end{equation}
 Observe that this quantity is precisely the derivative of $f$ with respect to the \emph{cylindrical} metric $\modd{z}/|z|$ on the punctured plane. 
  The second author \cite{sixsmithclassB} showed
  that the converse also holds: no function outside of class $\B$ exhibits this type of
   expansion near infinity. More precisely:

\begin{theoA}[Characterisation of class $\B$]
Suppose that $f$ is a {\tef}, and consider the quantity
\begin{equation}
\label{etadef}
     \eta(f) \defeq \lim_{R\rightarrow\infty}\inf_{\begin{subarray}{c} z\in\C\colon \\|f(z)|>R\end{subarray}} \left|z \frac{f'(z)}{f(z)}\right|.
\end{equation}
  Then, either $\eta(f) = \infty$ and $f\in\mathcal{B}$, or  $\eta(f) = 0$ and $f\notin\mathcal{B}$.
\end{theoA}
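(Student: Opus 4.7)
The implication $f \in \mathcal{B} \Rightarrow \eta(f) = \infty$ is precisely the Eremenko--Lyubich estimate \eqref{eqn:eremenkolyubichexpansion}; the content of the theorem is therefore the converse, $f \notin \mathcal{B} \Rightarrow \eta(f) = 0$. Assume $f \notin \mathcal{B}$, so that $S(f)$ is unbounded. If the critical values of $f$ are unbounded, pick critical points $c_n$ with $|f(c_n)| \to \infty$: then $f'(c_n) = 0$ forces $|c_n f'(c_n)/f(c_n)| = 0$ for arbitrarily large $|f|$, and $\eta(f) = 0$ immediately. The principal case is therefore that the critical values are bounded but the asymptotic values are unbounded; I pick a sequence $v_n$ of asymptotic values with $|v_n| \to \infty$, each accessed by a path $\gamma_n$ with $\gamma_n(t) \to \infty$ and $f(\gamma_n(t)) \to v_n$.

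Fix $v = v_n$ of large modulus and let $V$ be the unbounded component of $f^{-1}(D(v, |v|/2))$ containing the tail of $\gamma_n$; on $V$ one has $|f| \in (|v|/2, 3|v|/2)$. Because the critical values of $f$ are bounded and $|v|$ is large, one may pass to a smaller component $V'$ of $f^{-1}(D(v, \epsilon))$ (for some $\epsilon > 0$) on which $f - v$ has no zeros, so that $u := \log(f - v)$ is a conformal isomorphism of $V'$ onto a half-plane $H = \{\re(s) < c\}$ with $c = \log\epsilon$. The plan is to choose $z_T := u^{-1}(c - T) \in V'$ for $T \to \infty$. Then $|f(z_T) - v| = \epsilon\, e^{-T}$ is exponentially small, and combining the identity $f'(z_T) = (f(z_T) - v)\, u'(z_T)$ with the Koebe distortion estimate $|u'(z_T)| \lesssim T/\dist(z_T, \partial V')$ yields
\[
  \bigl|z_T\, f'(z_T)/f(z_T)\bigr| \;\lesssim\; T\, e^{-T}\cdot\frac{|z_T|}{\dist(z_T, \partial V')}.
\]
Since the exponential factor $T e^{-T}$ decays to zero, the theorem follows provided that the Euclidean ratio $|z_T|/\dist(z_T, \partial V')$ grows no faster than subexponentially in $T$.

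Verifying this growth bound is the principal technical obstacle. I would handle it by using the freedom in choosing $z_T$ along the level curve $\{\re u = c - T\} \subset V'$: selecting the point on that curve where $\dist(\cdot, \partial V')$ is nearly maximal (up to the universal Koebe distortion factor), and applying the standard univalent-function distortion estimates for $u^{-1} \colon H \to V'$ on hyperbolic discs of bounded radius in $H$, one concludes that $|z_T|$ along such a choice grows at most polynomially in $T$, comfortably satisfying the required bound. The indirect case, in which $f - v$ does have zeros on the original $V$, is handled by the same strategy applied to a further restriction inside $V$; since the critical values of $f$ are bounded, no such zero is critical once $|v|$ is chosen large, so the construction of the nonvanishing subdomain $V'$ causes no new difficulty.
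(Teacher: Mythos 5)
There is a genuine gap, and it sits exactly where you place the weight of the argument: in the reduction to the half-plane model $u=\log(f-v)\colon V'\to H$. Your construction presupposes that the singularity of $f^{-1}$ over $v$ is \emph{logarithmic}, i.e.\ that some unbounded component $V'$ of $f^{-1}(B(v,\eps))$ containing the tail of the asymptotic path satisfies $f\neq v$ on $V'$ and is mapped onto the punctured disc as a universal covering, so that $u$ is a conformal isomorphism onto a half-plane. Neither part is available in general. First, if $v$ is an \emph{indirect} singularity (in Iversen's sense), then by definition \emph{every} component of $f^{-1}(B(v,\eps))$ over the singularity contains points with $f=v$, no matter how small $\eps$ is; these are typically simple, non-critical preimages of $v$, so the boundedness of the critical values does nothing to remove them. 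Your closing remark that ``no such zero is critical once $|v|$ is chosen large, so the construction of the nonvanishing subdomain $V'$ causes no new difficulty'' misreads the obstruction: the problem is not that the zeros are critical but that they persist in every such tract. Second, even when $f\neq v$ on the tract (a direct singularity), outside class $\B$ the other singular values of $f$ may accumulate at $v$ (the set of asymptotic values can even be the whole plane), so $f\colon V'\to B(v,\eps)\setminus\{v\}$ need not be a covering and $u$ need not be univalent. Your bound $|u'(z_T)|\lesssim T/\dist(z_T,\partial V')$ survives via Schwarz--Pick, but the crucial growth estimate --- that $|z_T|$ grows at most polynomially in $T$ --- is justified only by ``univalent-function distortion estimates for $u^{-1}$'', which you no longer have; and indeed without that structure $f$ may approach $v$ arbitrarily slowly along the asymptotic path while $|z|$ grows arbitrarily fast, so the ratio $|z_T|/\dist(z_T,\partial V')$ cannot be controlled subexponentially in $T$ by any soft argument.

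For comparison, the paper's proof of the second half of Theorem~A avoids tract geometry entirely. Lemma~\ref{lem:inversebranches} (an elementary analytic-continuation argument) shows that any preimage component over a neighbourhood of a singular value on which $f$ is not a conformal isomorphism contains either a critical point or an asymptotic curve $\gamma$ mapped \emph{one-to-one onto a straight line segment} of finite Euclidean length; since the cylindrical metric is complete at infinity, $\gamma$ has infinite cylindrical length, and Proposition~\ref{prop:contraction} (equivalently Theorem~\ref{thm:shrinkinginfinity} applied to $\rho(z)=1/|z|$ and singular values $s_n\to\infty$) gives $\eta(f)=0$ at once --- no rate of approach to the asymptotic value, and no control of $|z|$ along the curve, is needed. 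The quantitative analysis you attempt is much closer in spirit to the paper's proof of the stronger Theorem~\ref{thm:strong}, where the lack of completeness forces exactly this kind of estimate; there the univalence you need is secured by the ``discs of univalence'' of Lemma~\ref{lem:inversebranches} and Corollary~\ref{cor:disjointtracts}, and the decay of $|f'|$ is extracted by comparing $K$ pairwise disjoint tracts \`a la Denjoy--Carleman--Ahlfors rather than by analysing a single tract, since a single tract need not be thin. If you want to salvage your outline, you should either route the argument through those univalent discs, or abandon the quantitative approach and use the completeness of the cylindrical metric as above.
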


Theorem~\ref{thm:hyperbolicity} follows from the following strengthening of Theorem~A. 
\begin{thm}[Shrinking property near infinity]\label{thm:shrinkinginfinity}
  Let $f$ be a transcendental entire (or meromorphic) 
    function and $s\in S(f)$. Let $U$ be an open neighbourhood of $s$. Suppose that $\rho(z)\modd{z}$ is a conformal metric
    defined on $V\defeq f^{-1}(U)$ such that $\dist_{\rho}(z,\infty)=\infty$ for all $z\in V$. Then 
    \begin{equation}\label{eqn:contractionconclusion} \inf_{z\in V} \frac{|f'(z)|}{\rho(z)} = 0. \end{equation}
\end{thm}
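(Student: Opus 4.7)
The plan is to argue by contradiction: assume $c \defeq \inf_{z\in V}|f'(z)|/\rho(z) > 0$, so that $|f'(z)| \ge c\,\rho(z)$ throughout $V$. Integrated along any path $\sigma : [0,\tau) \to V$, this expansion estimate reads
\[ \mathrm{length}_{\mathrm{Eucl}}(f\circ\sigma) \;\ge\; c\cdot\mathrm{length}_{\rho}(\sigma). \]
The aim is to produce a path $\sigma$ with $|\sigma(t)|\to\infty$ as $t\to\tau$ (so that by the completeness-at-infinity hypothesis its $\rho$-length is infinite) and with $f\circ\sigma$ of finite Euclidean length, which contradicts this inequality.

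The hypothesis $s\in S(f) = \overline{CV(f)\cup AV(f)}$ guarantees that $U$ contains either a critical value or an asymptotic value of $f$. If $U$ contains a critical value $v$, then any critical point $z_0\in f^{-1}(v)\cap V$ satisfies $|f'(z_0)|/\rho(z_0)=0$, already contradicting $c>0$. Otherwise, I fix an asymptotic value $s^*\in U$ and a ball $B(s^*,\delta)\subset U$ containing no critical values. An \emph{indirect} transcendental singularity of $f$ over $s^*$ would force critical values to accumulate at $s^*$ (by classical results of Bergweiler--Eremenko type), putting a critical value into $U$ and returning us to the previous case; hence I may assume the singularity over $s^*$ selected by any asymptotic path $\tilde\gamma:[0,\infty)\to\C$ (with $|\tilde\gamma(t)|\to\infty$ and $f(\tilde\gamma(t))\to s^*$) is \emph{direct}. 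In that situation the connected component $V_0$ of $f^{-1}(B(s^*,\delta))$ containing the tail of $\tilde\gamma$ is unbounded and satisfies $f^{-1}(s^*)\cap V_0 = \emptyset$.

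Next, fix $T$ large enough that $z_0 \defeq \tilde\gamma(T)\in V_0$ and $f(z_0)\ne s^*$, let $\alpha:[0,1)\to B(s^*,\delta)\setminus\{s^*\}$ be the Euclidean line segment from $f(z_0)$ to $s^*$ minus its endpoint, and let $\sigma:[0,\tau)\to V_0$ be the maximal lift of $\alpha$ starting at $z_0$; this lift exists because $f|_{V_0}$ is a local homeomorphism. I claim $|\sigma(t)|\to\infty$ as $t\to\tau$. Suppose on the contrary that $\sigma$ has a finite cluster point $z^*\in\overline{V_0}\cap\C$. If $z^*\in V_0$, then either $\tau<1$ and a local inverse of $f$ near $z^*$ extends $\sigma$ past $t=\tau$, contradicting maximality, or $\tau=1$ and $f(z^*)=s^*$, contradicting $f^{-1}(s^*)\cap V_0=\emptyset$. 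If instead $z^*\in\partial V_0\cap\C$, then some neighbourhood of $z^*$ maps into $B(s^*,\delta)$ and, being connected and meeting $V_0$, must lie in $V_0$, forcing $z^*\in V_0$, again impossible.

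Therefore $|\sigma(t)|\to\infty$ as $t\to\tau$, so completeness at infinity gives $\mathrm{length}_{\rho}(\sigma)=\infty$, while $\mathrm{length}_{\mathrm{Eucl}}(f\circ\sigma) \le \mathrm{length}_{\mathrm{Eucl}}(\alpha) \le \delta<\infty$, contradicting the inequality above. The main obstacle is justifying the reduction to the direct-singularity case: one must know that an indirect transcendental singularity over $s^*$ is necessarily accompanied by critical values of $f$ accumulating at $s^*$, which reduces to the critical-value step handled at the start. Once in the direct case, the straight-line lifting argument runs smoothly, and the contraction conclusion \eqref{eqn:contractionconclusion} follows.
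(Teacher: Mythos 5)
Your overall strategy (produce a curve to infinity in $V$ whose image under $f$ has finite Euclidean length, then compare $\rho$-length with Euclidean length of the image) is exactly the mechanism of the paper's proof, and your critical-value case, the lifting argument in the direct case, and the final length comparison are sound. The gap is the reduction to the direct case. You dispose of the indirect case by asserting that an indirect transcendental singularity over $s^*$ forces critical values to accumulate at $s^*$, ``by classical results of Bergweiler--Eremenko type''. That theorem of Bergweiler and Eremenko is a \emph{finite-order} result: it is proved for meromorphic functions of finite order, and the finite-order hypothesis is essential; no such implication is available for arbitrary transcendental entire or meromorphic functions, which is precisely the generality of Theorem~\ref{thm:shrinkinginfinity}. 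If the singularity over $s^*$ is indirect and $U$ contains no critical values, your argument breaks at the lifting step: for every $\delta$ the tract $V_0$ may contain points of $f^{-1}(s^*)$, so the maximal lift of the segment ending at $s^*$ can terminate at such a finite preimage, and you obtain no unbounded curve and no contradiction.

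The paper sidesteps the direct/indirect dichotomy entirely (this is the whole point of Lemma~\ref{lem:inversebranches}): instead of lifting a segment ending at the prescribed asymptotic value, one takes a branch $\phi$ of $f^{-1}$ at a non-critical base point and extends it over a maximal round disc $D$ inside (a Riemann-mapped copy of) $U$; at some obstruction point $s'\in\partial D$ either $\phi$ tends to a critical point (excluded once the infimum in \eqref{eqn:contractionconclusion} is assumed positive) or $\phi(z)\to\infty$ as $z\to s'$, in which case $\gamma=\phi(L)$, with $L$ the radius of $D$ ending at $s'$, is by construction an unbounded curve in $V$ mapped one-to-one onto a segment of finite length. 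The asymptotic value actually used is the obstruction point $s'$, which need not be the singular value you started from but only some singular value inside $U$ --- and that is all the length comparison needs. Replacing your reduction by this maximal-disc argument (equivalently, quoting Lemma~\ref{lem:inversebranches} and Corollary~\ref{cor:asymptoticdense}) turns your proof into the paper's proof of Proposition~\ref{prop:contraction}, from which Theorem~\ref{thm:shrinkinginfinity} follows by taking $\sigma$ to be the Euclidean metric.
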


 Note that one obtains the second half of Theorem~A by applying Theorem~\ref{thm:shrinkinginfinity} to the cylindrical metric and a sequence of 
    singular values tending to infinity. A similar application of Theorem~\ref{thm:shrinkinginfinity} gives Theorem~\ref{thm:hyperbolicity}. 
    
    Clearly some condition must be imposed upon the metric
    $\rho$ when $U$ contains no critical values. Indeed, if we let $\rho$ be the pull-back of the
    Euclidean metric under $f$, then~\eqref{eqn:contractionconclusion} fails by definition. However, it appears plausible that such a pull-back must decay 
    rapidly on some approach to infinity, and so the conclusion of Theorem~\ref{thm:shrinkinginfinity} holds for certain metrics that are not complete
    at infinity. The following theorem, another considerable strengthening of Theorem~A, shows that this is indeed the case. 

\begin{theo}[Metrics with polynomially decaying densities]
\label{thm:strong}
  Let $f$ be a transcendental entire (or meromorphic) function and $s\in S(f)$. If $U$ is an open neighbourhood of $s$, then 
 \[ \inf_{z \in f^{-1}(U)} (1+ |z|^{\tau})\cdot |f'(z)| = 0 \]
    for all $\tau>0$. 
\end{theo}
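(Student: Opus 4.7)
The plan is to reduce to the case of an asymptotic value $s' \in U$, uniformise an unbounded component (a logarithmic tract) of $f^{-1}(U)$ by a half-plane, and then exploit the exponential decay of $f-s'$ inside the tract together with the Koebe distortion theorem, which forces only polynomial behaviour of the conformal model.

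First the reduction. If $U$ contains a critical value $s' = f(c)$ with $f'(c) = 0$, then $c \in f^{-1}(U)$ and $(1+|c|^\tau)\,|f'(c)| = 0$, so the infimum is immediately zero. Hence we may assume that $U$ contains no critical values. Since $s \in S(f)$, the neighbourhood $U$ must then meet the set of finite asymptotic values; pick such an $s' \in U$, and (assuming that $s'$ is isolated in $S(f)$, the general case being handled by a further reduction or careful choice of tract) choose a disk $U' = D_\epsilon(s') \subset U$ so that $U' \setminus \{s'\}$ contains no singular values. Because $f^{-1}(U') \subset f^{-1}(U)$, it suffices to prove that $\inf_{z \in f^{-1}(U')}(1+|z|^\tau)|f'(z)| = 0$.

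Choose an unbounded component $T$ of $f^{-1}(U')$; one exists since $s'$ is an asymptotic value. The restriction $f\colon T \to U' \setminus \{s'\}$ is a universal covering (any unbounded connected cover of a punctured disk must be universal), so $T$ is simply connected and admits a conformal isomorphism $\phi\colon H \to T$, where $H = \{w : \re w > 0\}$, normalised so that $f(\phi(w)) = s' + \epsilon\, e^{-w}$. Differentiating yields
\[
|f'(\phi(w))| \;=\; \frac{\epsilon\, e^{-\re w}}{|\phi'(w)|}.
\]
Since $\phi$ is univalent, pulling back via $\psi(w) = (w-1)/(w+1)$ to a univalent function on $\D$ and applying the Koebe distortion inequalities yields, along the positive real axis $w = \sigma \to +\infty$,
\[
|\phi(\sigma)| \;\leq\; C_1\,\sigma^2 \qquad\text{and}\qquad |\phi'(\sigma)| \;\geq\; C_2\,\sigma^{-3},
\]
for constants $C_1, C_2 > 0$ depending only on $|\phi(1)|$ and $|\phi'(1)|$. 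Substituting gives
\[
\bigl(1+|\phi(\sigma)|^\tau\bigr)\,|f'(\phi(\sigma))| \;\leq\; \frac{\epsilon\bigl(1 + C_1^\tau \sigma^{2\tau}\bigr)\,\sigma^3\, e^{-\sigma}}{C_2} \;\longrightarrow\; 0 \qquad (\sigma \to \infty),
\]
and since $\phi(\sigma) \in f^{-1}(U')$ for every $\sigma > 0$, the infimum is zero.

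The principal obstacle is the potentially complicated geometry of $T$: the tract can be thin, spiralling, or otherwise ill-behaved, so $|\phi(\sigma)|$ and $1/|\phi'(\sigma)|$ depend on features of $T$ that are hard to control directly. The Koebe distortion theorem is exactly the right tool: for the univalent map $\phi$ from a half-plane, it forces both $|\phi|$ and $1/|\phi'|$ to grow at most polynomially along any fixed hyperbolic geodesic, independently of the shape of $T$. The exponential factor $e^{-\re w}$, coming from the logarithmic nature of the asymptotic singularity at $s'$, then swamps every polynomial, yielding the required infimum $0$ for all $\tau > 0$. The meromorphic case is identical up to allowing $T$ to be an unbounded component of the preimage of a neighbourhood in $\widehat{\C}$.
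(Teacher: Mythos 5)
There is a genuine gap, and it sits exactly where you wave your hands: the parenthetical ``assuming that $s'$ is isolated in $S(f)$, the general case being handled by a further reduction or careful choice of tract''. The set $S(f)$ need not have any isolated points near $s$: it can be a continuum or a Cantor set, and then no asymptotic value $s'\in U$ admits a punctured disc $D_\epsilon(s')\setminus\{s'\}$ free of singular values. In that situation the unbounded preimage component is \emph{not} a covering of a punctured disc, there is no universal-covering (logarithmic) model $f(\phi(w))=s'+\epsilon e^{-w}$, and your whole mechanism collapses, because that exponential factor is the only thing beating the polynomial $(1+|z|^\tau)$. What one does have in general (this is the paper's Lemma~\ref{lem:inversebranches} and Corollary~\ref{cor:disjointtracts}) are \emph{discs of univalence}: round discs $D$ with an asymptotic value on $\partial D$ over which a single-valued branch of $f^{-1}$ exists, so that $f$ maps the corresponding tract \emph{univalently} onto $D$. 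On such a tract the rate at which $f$ approaches the asymptotic value along the asymptotic curve is governed by the geometry (the angular width) of the tract, and for a single fat tract it can be as slow as a small negative power of $|z|$; a single tract therefore cannot defeat an arbitrary $\tau$. This is why the paper runs a Denjoy--Carleman--Ahlfors-type argument: it takes $K$ pairwise disjoint tracts, uses the angular-measure bound $\sum_n\theta_n(x)\le 2\pi$ together with Cauchy--Schwarz and the standard estimate $\rho_G(z)\ge 1/(2\dist(z,\partial G))$ to show that along some tract the hyperbolic length grows like $(K^2/4\pi)\log x$, forcing $|f(z)-a_n|\lesssim |z|^{-K/(4\pi)}$ there; choosing $K$ large compared with $\tau$ then contradicts the assumed lower bound $|f'(z)|\ge |z|^{-\tau}$. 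Nothing in your proposal supplies this quantitative ``many disjoint tracts, so one is thin'' step, and no choice of a single tract can replace it.

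Two further points. First, your blanket claim that ``any unbounded connected cover of a punctured disk must be universal'' is not a theorem of covering-space theory (a finite-degree cover could a priori be a punctured disc whose puncture corresponds to $\infty$); ruling this out is what the paper's Lemma~\ref{lem:logarithmic} together with the no-removable-singularity hypothesis in Corollary~\ref{cor:disjointtracts} is for, so this needs an argument even in the isolated case. Second, on the positive side: your reduction to the critical-value-free case is fine, and in the genuinely logarithmic situation your half-plane uniformisation plus Koebe distortion estimates ($|\phi(\sigma)|\le C_1\sigma^2$, $|\phi'(\sigma)|\ge C_2\sigma^{-3}$) are correct and give a clean, direct proof of that special case; it is only the non-isolated case — which is the actual content of the theorem — that your approach does not reach.
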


  One obtains the second half of Theorem~A from Theorem~\ref{thm:strong} by setting $\tau=1$. Similarly, one obtains Theorem~\ref{thm:shrinkinginfinity} with $\rho$ being the spherical metric by setting $\tau=2$. 
  
  Moreover, if $f$ is transcendental entire, then we can apply Theorem~\ref{thm:strong} to the function $1/f$, the singular value $s=0$ and $\tau=2$. As claimed above, this implies that the function $f$ cannot be expanding with respect to the spherical metric,
     on the preimage of a full neighbourhood of $\infty$. 

\begin{cor}[No expansion for the spherical metric]\label{cor:sphericalexpansion}
  Let $f$ be a transcendental entire function, and let $R>0$. Then
  \begin{equation}
  \label{sphexpansion}
   \inf_{\begin{subarray}{c}z\in\C\colon \\ |f(z)|>R\end{subarray}} |f'(z)|\cdot \frac{1+|z|^2}{1+|f(z)|^2} = 0.
  \end{equation} 
\end{cor}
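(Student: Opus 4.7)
The plan is to follow the hint preceding the statement and apply Theorem~\ref{thm:strong} to the meromorphic function $g \defeq 1/f$, with singular value $s = 0$ and exponent $\tau = 2$. Since $f$ is transcendental entire, $g$ is a transcendental meromorphic function on $\C$ (entire if $f$ has no zeros; otherwise with poles precisely at the zeros of $f$). Every transcendental entire function has $\infty$ as an asymptotic value (by, for example, Iversen's theorem, or simply the existence of a path along which $f \to \infty$), so $0$ is an asymptotic value of $g$ and hence $0 \in S(g)$; Theorem~\ref{thm:strong} is therefore applicable to $g$ at $s=0$.

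Given $R > 0$, I would choose the open neighbourhood $U \defeq \{w \in \C : |w| < 1/R\}$ of $0$. Every pole of $g$ maps to $\infty \notin U$, so the preimage is exactly
$$ g^{-1}(U) = V \defeq \{z \in \C : |f(z)| > R\}. $$
Applying Theorem~\ref{thm:strong} with $\tau = 2$ then yields
$$ \inf_{z \in V} (1 + |z|^2) \, |g'(z)| = 0. $$
On $V$ the function $f$ does not vanish, so a direct computation gives $|g'(z)| = |f'(z)|/|f(z)|^2$. Since $|f(z)|^2/(1+|f(z)|^2) < 1$ throughout $V$, I can write
$$ |f'(z)| \cdot \frac{1 + |z|^2}{1 + |f(z)|^2} \;=\; (1 + |z|^2) \, |g'(z)| \cdot \frac{|f(z)|^2}{1 + |f(z)|^2} \;<\; (1 + |z|^2) \, |g'(z)|, $$
so the infimum of the left-hand side over $V$ must vanish as well. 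This is exactly the assertion \eqref{sphexpansion}.

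There is essentially no obstacle: the corollary is really contained in Theorem~\ref{thm:strong}, and the argument reduces to a bookkeeping exercise under the reciprocal map. The two points worth being careful about are that Theorem~\ref{thm:strong} genuinely covers the meromorphic case (the statement explicitly does) and that $0 \in S(g)$, which follows from the asymptotic-value remark above.
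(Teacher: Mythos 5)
Your proposal is correct and follows exactly the paper's intended route: the authors prove Corollary~\ref{cor:sphericalexpansion} precisely by applying Theorem~\ref{thm:strong} to $1/f$ with $s=0$ and $\tau=2$, and your write-up just supplies the routine details (that $0\in S(1/f)$ since $\infty$ is an asymptotic value of $f$, the identification of the preimage of $\{|w|<1/R\}$, and the elementary estimate $|f(z)|^2/(1+|f(z)|^2)<1$), all of which are sound.
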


\begin{rmk}
Corollary~\ref{cor:sphericalexpansion} can also be seen to follow from Wiman-Valiron theory, which shows that there are points of arbitrarily large modulus near which $f$ behaves like a monomial of arbitrarily large degree. It is straightforward to deduce (\ref{sphexpansion}) from this fact. 
 (More precisely, the claim follows e.g.\ from formulae (3) and (4) in the statement of the Wiman-Valiron theorem in \cite[p.~340]{alexescaping}.)
We are grateful to Alex Eremenko for this observation.
\end{rmk}

\subsection*{The Eremenko-Lyubich class on a hyperbolic domain}
Finally, we consider the analogue of the class $\mathcal{B}$ for functions analytic in a hyperbolic domain. Suppose that $\Omega\subset\C$ 
   is a hyperbolic domain
  (or, more generally, a hyperbolic Riemann surface), and that 
     $f\colon \Omega \to \mathbb{C}$ is analytic. We say that $f$ belongs to the Eremenko-Lyubich class, $\classb$, if the set of singular values of $f$ is bounded. We modify the definition (\ref{etadef}) as follows:
\begin{equation}
\label{def_of_s}
\eta_{\Omega}(f) \defeq \lim_{R\rightarrow\infty} \inf_{\begin{subarray}{c}z\in\Omega\colon \\ |f(z)|>R\end{subarray}} \| \Deriv f(z)\|,
 \end{equation}
 where the norm of the derivative is evaluated using the hyperbolic metric on $\Omega$ and the cylindrical metric on the range. 

\begin{theo}[Class $\B$ on a hyperbolic surface]
\label{thm:hyperbolicsurface}
Suppose that $\Omega$ is a hyperbolic surface and that $f\colon \Omega \to \mathbb{C}$ is analytic and unbounded. Then, either $\eta_{\Omega}(f) = \infty$ and $f\in\mathcal{B}$, or $\eta_{\Omega}(f) = 0$ and $f\notin\mathcal{B}$.
\end{theo}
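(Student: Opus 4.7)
I will prove the two halves of the dichotomy separately, mirroring Theorem~A. \emph{Easy direction ($f \in \B \Rightarrow \eta_\Omega(f) = \infty$).} Fix $R_0$ with $S(f) \subset \overline{D(0,R_0)}$, and for $R > R_0$ set $W_R \defeq \{w \in \C : |w| > R\}$. Since $W_R$ contains no singular values, $f \colon f^{-1}(W_R) \to W_R$ is a holomorphic covering map, hence a local isometry for the respective hyperbolic metrics. Domain monotonicity $\rho_{f^{-1}(W_R)} \geq \rho_{\Omega}$ gives $|f'(z)| \rho_{W_R}(f(z)) \geq \rho_{\Omega}(z)$, which combined with the explicit formula $\rho_{W_R}(w) = 1/(|w| \log(|w|/R))$ for the hyperbolic density on the conformal annulus $W_R$ yields
\[
    \|\Deriv f(z)\| \;=\; \frac{|f'(z)|}{|f(z)| \rho_{\Omega}(z)} \;\geq\; \frac{1}{|f(z)| \rho_{W_R}(f(z))} \;=\; \log \frac{|f(z)|}{R},
\]
which tends to $\infty$ as $|f(z)| \to \infty$; hence $\eta_{\Omega}(f) = \infty$.

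\emph{Harder direction ($f \notin \B \Rightarrow \eta_{\Omega}(f) = 0$).} Here I would transfer the strategy behind Theorem~\ref{thm:shrinkinginfinity} to the hyperbolic-surface setting. Choose critical or asymptotic values $s_n$ with $|s_n| \to \infty$, together with a disc $U_n$ around $s_n$ of diameter at most $|s_n|/4$, so that $|f(z)| \geq |s_n|/2$ whenever $z \in f^{-1}(U_n)$. It suffices to produce, for each $n$, a point $z_n \in f^{-1}(U_n)$ with $|f'(z_n)|/\rho_{\Omega}(z_n) \to 0$: since $|f(z_n)| \geq |s_n|/2 \to \infty$, this gives $\|\Deriv f(z_n)\| \to 0$, and hence $\eta_{\Omega}(f) = 0$. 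If $s_n$ is a critical value, pick $z_n$ with $f(z_n) = s_n$ and $f'(z_n) = 0$. If $s_n$ is an asymptotic value, start from a path $\gamma_0 \colon [0,1) \to \Omega$ along which $\gamma_0(t)$ exits every compact subset of $\Omega$ and $f(\gamma_0(t)) \to s_n$, and lift (after a generic perturbation to avoid critical values) the line segment from $f(\gamma_0(t_0))$ to $s_n$. This produces a curve $\gamma_n \colon [0,T) \to \Omega$ such that $f \circ \gamma_n$ has finite Euclidean length and $\gamma_n(t)$ still exits every compact subset of $\Omega$. Then
\[
    \int_{\gamma_n} |f'(z)| \, |dz| \;<\; \infty \;=\; \int_{\gamma_n} \rho_{\Omega}(z) \, |dz|,
\]
the right-hand side being infinite by completeness of $\rho_{\Omega}$, so one can choose $z_n$ on $\gamma_n$ with $|f'(z_n)|/\rho_{\Omega}(z_n) < 1/n$.

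\emph{Main obstacle.} The main subtlety is the lifting step in the asymptotic case: one must verify that the maximal lift of the segment escapes to the ideal boundary of $\Omega$ rather than terminating at an interior preimage of $s_n$. Inside the relevant ``tract'' $T_n$ (the connected component of $f^{-1}(U_n)$ containing a tail of $\gamma_0$, which is not compactly contained in $\Omega$), the lift cannot exit through $\partial T_n$ in $\Omega$, since $f \circ \gamma_n$ stays in the open disc $U_n$ while $f(\partial T_n) \subset \partial U_n$; so it must escape to the ideal boundary of $\Omega$. The remaining bookkeeping is routine and essentially identical to the argument underlying Theorem~\ref{thm:shrinkinginfinity}.
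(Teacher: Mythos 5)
Your first half ($f\in\B\Rightarrow\eta_\Omega(f)=\infty$) is correct and is essentially the paper's argument: the covering onto $\{|w|>R\}$, the Schwarz lemma $\rho_{f^{-1}(W_R)}\geq\rho_\Omega$, and the explicit hyperbolic density of the punctured neighbourhood of infinity. The overall strategy for the other half (escaping curves with finite-length image near unbounded singular values, versus completeness of $\rho_\Omega$) is also the paper's, but its execution has a genuine gap at exactly the step you flag, and your resolution of that step is not valid. The dichotomy you state for the maximal lift $\gamma_n$ of the segment from $f(\gamma_0(t_0))$ to $s_n$ omits a third possibility: the lift may converge to an \emph{interior} point $z^*\in T_n$ with $f(z^*)=s_n$. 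The observation that $f(\partial T_n)\subset\partial U_n$ only rules out exiting through $\partial T_n$; it does not force escape to the ideal boundary, because an asymptotic value can have ordinary (non-critical) preimages inside the very component $T_n$ containing the tail of the asymptotic path, and the inverse branch you follow may continue analytically all the way to $s_n$ and land at one of them. (For instance, restrict $z\mapsto \sin(z)/z$ to a right half-plane: $0$ is an asymptotic value along the real axis, the component of $f^{-1}(D(0,\eps))$ containing the real tail also contains the regular zeros $m\pi$, and the lift of the radial segment started at a point $x_0=2k\pi+\pi/2$ of the asymptotic path terminates at the interior preimage $(2k+1)\pi$.) When this happens, $\gamma_n$ has finite hyperbolic length and produces no point with small $|f'|/\rho_\Omega$, so the argument stalls; nothing in "$T_n$ is not compactly contained in $\Omega$" prevents it.

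The missing idea is the maximality built into Lemma~\ref{lem:inversebranches}: rather than lifting a segment towards a prescribed singular value, continue a branch $\phi$ of $f^{-1}$, based at a non-critical point of $T_n$ (e.g.\ on $\gamma_0$), over the largest possible concentric disc in $U_n$ (after a Riemann map to $\D$). Since $f\colon T_n\to U_n$ is not a conformal isomorphism (otherwise the tail of $\gamma_0$ would converge to a point of $T_n$), this maximal disc is proper, and at an obstructing boundary point the branch does not extend; hence either $\phi$ accumulates at a critical point of $f$ in $f^{-1}(U_n)$, where $\|\Deriv f\|=0$, or $\phi$ tends to $\infty_\Omega$ along the lift of a radius, giving a curve escaping to the ideal boundary whose image is a segment of finite length. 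That is precisely Proposition~\ref{prop:contraction} applied with $\rho=\rho_\Omega$ (complete) and $\sigma$ the cylindrical metric, which is how the paper proves this direction; with that substitution, your remaining bookkeeping ($|f(z_n)|\geq|s_n|/2\to\infty$, hence $\eta_\Omega(f)=0$) goes through as written. The "generic perturbation to avoid critical values" is harmless (critical values are countable), but it is not where the difficulty lies.
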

\begin{rmk}
  For the second part of Theorem~\ref{thm:hyperbolicsurface}, we could replace the hyperbolic metric by any complete metric on $\Omega$, as in Theorem~\ref{thm:shrinkinginfinity}. However, Theorem~\ref{thm:hyperbolicsurface} as stated provides an appealing dichotomy in terms of the conformally
   natural quantity $\eta_{\Omega} (f)$.
\end{rmk}

\subsection*{Ideas of the proofs}
Our proof of Theorem~\ref{thm:shrinkinginfinity} considerably 
  simplifies the original proof of Theorem~A, and can be summarized as follows. The set $V$ must contain either a critical point of $f$ or 
  an asymptotic curve $\gamma$ whose image is a line segment in $U$; this is a classical and elementary fact, but its connection to
  the questions at hand appears to have been overlooked. Since the $\rho$-length of
  $\gamma$ is infinite, while the Euclidean length of $f(\gamma)$ is finite, the conclusion is immediate. 

 This argument clearly does not apply to metrics that are not complete at infinity, and hence a more detailed analysis is required 
   for the proof of Theorem~\ref{thm:strong}. Once again, we rely on elementary mapping properties of functions near a singular value, $s$, that is
   not the limit of critical values. We show that 
   there are infinitely many, pairwise disjoint, unbounded simply-connected domains on which the function in question is univalent, and which
   are mapped to round discs near $s$; see Corollary~\ref{cor:disjointtracts}. 
   Using a similar idea as in the classical proof of the Denjoy-Carleman-Ahlfors theorem, 
   it follows that, within some of these domains, the function must tend very quickly towards a corresponding asymptotic value. This leads to the 
   desired conclusion. 

  Finally, the second part of Theorem~\ref{thm:hyperbolicsurface} follows in the same manner as Theorem~\ref{thm:shrinkinginfinity}. 
    The first part, on the other hand, can be deduced in a similar way to the proof of \cite[Lemma~1]{alexmisha}), although we adopt a slightly different approach using basic properties of the hyperbolic metric. 

\subsection*{Structure of the article}
  In Section~\ref{sec:singularvalues} we give background on the notion of singular values, and prove some basic results. In Section~\ref{sec:main}, we deduce Theorems~\ref{thm:hyperbolicity}, ~\ref{thmdef:hyperbolicity} and \ref{thm:shrinkinginfinity}. 
   Sections~\ref{sec:strong} 
   and~\ref{sec:hyperbolicsurface} prove Theorems~\ref{thm:strong} and~\ref{thm:hyperbolicsurface}. Appendix A contains 
   historical remarks concerning notions of hyperbolicity for entire functions, and Appendix B concerns the functions in Figure~1.

\subsection*{Basic background and notation}
 
 We denote the complex plane, the Riemann sphere and the unit disc by $\C$, $\sphere$ and $\disc$ respectively. 
    For Euclidean discs, we use the notation
\[ B(\zeta, \ r) = \{ z \colon |z-\zeta| < r \}, \qfor 0 < r, \ \zeta\in\mathbb{C}. \]

 If $X$ is a Riemann surface, then we denote by $\infty_X$ the added point in the one-point compactification of $X$. For example, if $(z_n)$ is a sequence of points of $X$ which eventually leaves any compact subset of $X$, then $\lim_{n\rightarrow\infty} z_n = \infty_X$. 
 (Recall that, if $X$ is already compact, then $\infty_X$ is an isolated point of the  one-point compactification. This allows for
  uniformity of statements and definitions.)
 
 Closures and boundaries are always taken in an underlying Riemann surface $X$; which $X$ is meant should be clear from the context. 

 Suppose that $X$ is a Riemann surface, and $U\subset X$ is open. 
    A \emph{conformal metric} on $U$ 
    is a tensor that takes the form
    $\rho(z)\modd{z}$ in local coordinates, with $\rho$ a continuous positive function. When $X = \C$, which is the case of most interest to us,
    we can express the metric globally in this form, and we do not usually distinguish between the metric and its density function $\rho(z)$.
    
   If $\gamma \subset U$ is a locally rectifiable curve, then we denote the length of $\gamma$ with respect to the metric $\rho$ by $\ell_\rho(\gamma)$. If $z, w \in U$, then we denote the distance from $z$ to $w$ with respect to the metric $\rho$ by $\dist_\rho(z,w)$; i.e.\
    $\dist_{\rho}(z,w)=\inf_{\gamma} \ell_{\rho}(\gamma)$, where the infimum is taken over all curves connecting $z$ and $w$. Note that, by definition,
    this distance is infinite if $z$ and $w$ belong to different components of $U$. Abusing notation slightly, we define $\dist_{\rho}(z,\infty_X)\defeq \liminf_{w\to\infty_X}\dist_{\rho}(z,w)$. By definition, this quantity is infinite if 
    $U$ is relatively compact in $X$.
    
    If $z \in X$ and $S \subset X$, then we also set $\dist_\rho(z, S) \defeq \inf_{w \in S} \dist_\rho(z,w)$, and define $\diam_\rho(S) = \sup_{w_1, w_2 \in S} \dist_\rho(w_1, w_2)$.  
    When $X=\C$ and $\rho$ is the Euclidean metric, then we write simply $\ell(\gamma)$, $\dist(z,w)$ and $\dist(z, S)$. 

If $X$ is a hyperbolic surface, then we write $\rho_X$ for the hyperbolic metric in $X$ and, in local coordinates, denote its density function by $\rho_X(z)$.
 
\subsection*{Acknowledgements}
We would like to thank Walter Bergweiler and Alex Eremenko for interesting discussions about the possibility of strengthening and
   extending Theorem~A, which led us to discover the results presented in this article.

%
%
%
%
\section{Singular values}
\label{sec:singularvalues}
In this section, we first review the definitions of singular values. While we apply them mainly for meromorphic functions defined
   on subsets of the complex plane, we introduce them in the more general setting of analytic functions between Riemann surfaces;
   see also \cite{adamthesis}. We do so to facilitate future reference, and to emphasize the general nature of our considerations. 
   In contrast to
  previous articles on similar subjects, we do not require Iversen's more precise classification of inverse function singularities, for which we refer to 
   \cite{walteralexsingularities}, \cite{walteralexcompletelyinvariant}, \cite{iversenthesis} and also \cite{eremenkosingularitiestalk}.
   Instead, we only use some basic mapping properties of functions having singular values, which we derive here from first principles.

\begin{defn}[Singular values]\label{defn:singularvalues}
  Let $X$ and $Y$ be Riemann surfaces, let $f\colon X\to Y$ be analytic, and let $s\in Y$.
  \begin{enumerate}[(a)]
    \item $s$ is called a \emph{regular value} of $f$ if there is an open neighbourhood $U$ of $s$ with the following property: if $V$ is any
       connected component of $f^{-1}(U)$, then $f\colon V\to U$ is a conformal isomorphism.
   \item $s$ is called a \emph{singular value} of $f$ if $s$ is not a regular value.
   \item $s$ is called a \emph{critical value} of $f$ if it is the image of a critical point.
   \item $s$ is called an \emph{asymptotic value} of $f$ if there is a curve $\gamma\colon [0,\infty)\to X$ such that
      $\gamma(t)\to\infty_X$ and $f(\gamma(t))\to s$ as $t\to\infty$. Such $\gamma$ is called an \emph{asymptotic curve}.\label{item:asymptoticvalue}
  \end{enumerate}
  The sets of singular, critical and asymptotic values of $f$ are denoted by $S(f)$, $CV(f)$ and $AV(f)$ respectively. 
\end{defn}
\begin{rmk}[Comments on the definition]  
  Clearly $CV(f)\cup AV(f)\subset S(f)$. On the other hand, critical and asymptotic values are dense in
   $S(f)$ (see Corollary~\ref{cor:asymptoticdense} below), so that, in fact, $S(f)=\overline{AV(f)\cup CV(f)}$.

  Equivalently to the definition above, $S(f)$ is the smallest closed subset $S$ of $Y$ such that the restriction
        $f\colon f^{-1}(Y\setminus S)\to Y\setminus S$ is a covering map. 
        
        Observe that any value in $Y\setminus \overline{f(X)}$ is a regular value of $f$. For example,
       if $X=\D$, $Y=\C$ and $f(z)\defeq z$, then the set of singular values of $f$ coincides with the unit circle
       $\partial \D$. 
\end{rmk}

The following observation shows that, near any singular value, we can find
  inverse branches defined on round discs, and containing either critical or asymptotic values
  on their boundaries.

\begin{lem}[Asymptotic values with well-behaved inverse branches]\label{lem:inversebranches}
   Let $X$ be a Riemann surface, and let $f\colon X\to\D$ be analytic, but not a conformal isomorphism. 
   Then there exists a round disc $D$ with $\overline{D}\subset \D$, a branch $\phi\colon D\to X$ of $f^{-1}$ defined on $D$, and a singular value
   $s\in \partial D$ such that either 
   \begin{enumerate}[(a)]
     \item $\lim_{z\to s} \phi(z)$ is a critical point of $f$ in $X$, and $s$ is a critical value of $f$, or \label{item:critical}
     \item $\lim_{z\to s} \phi(z)=\infty_X$. In particular, $s$ is an asymptotic value of $f$, and
       there is an asymptotic curve $\gamma$ that maps one-to-one onto
       a straight line segment $f(\gamma)$ ending at $s$.\label{item:asymptotic}
   \end{enumerate}
\end{lem}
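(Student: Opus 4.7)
My strategy will be to extend a local inverse branch of $f$ by analytic continuation as far as possible, and then to examine the resulting obstruction. Since $f$ is non-constant, I pick a non-critical point $z_0\in X$, set $w_0 := f(z_0)$, and let $\Phi_0$ be the local inverse branch of $f$ at $w_0$ with $\Phi_0(w_0) = z_0$. As $\D$ is simply connected, $\Phi_0$ extends by analytic continuation to a single-valued branch $\Phi\colon E\to X$ of $f^{-1}$ on a maximal open set $E\subset\D$. The first step is to verify that $E\neq\D$: were $E = \D$, the section $\Phi\colon\D\to X$ would have open image, and the relation $f\circ\Phi = \mathrm{id}$ combined with continuity shows that if $\Phi(w_n)\to z^*\in X$ then $w_n\to f(z^*)\in\D$ and $\Phi(f(z^*)) = z^*$, so the image is also closed; connectedness of $X$ then forces $\Phi(\D)=X$ and hence $f$ to be a conformal isomorphism, contrary to hypothesis.

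To manufacture the required round disc, I pick any $s\in\partial E\cap\D$ and a sequence $w_n\in E$ converging to $s$. Since $\dist(w_n,\D\setminus E)\leq|w_n-s|\to 0$ while $1-|w_n|\to 1-|s|>0$, for $n$ large the open disc $D := B(w_n,r_n)$ of radius $r_n := \dist(w_n,\D\setminus E)$ satisfies $\overline{D}\subset\D$ and $D\subset E$, and the circle $\partial D$ touches $\D\setminus E$ at some point that I relabel $s$. This relabelled $s$ lies in $S(f)$: otherwise a local inverse of $f$ at a preimage of $s$ would, by the identity theorem, agree with $\Phi$ on a component of their common domain and thereby extend $\Phi$ past $s$, contradicting the maximality of $E$. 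Setting $\phi := \Phi|_D$ yields the sought branch of $f^{-1}$ on $D$.

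The last step is to identify $\lim_{z\to s}\phi(z)$ in the one-point compactification $X\cup\{\infty_X\}$. I consider
\[ K := \bigcap_{\varepsilon > 0}\overline{\phi(D\cap B(s,\varepsilon))}, \]
the set of all subsequential limits of $\phi(z_n)$ for $z_n\to s$ in $D$. Each closure in this intersection is connected (as the continuous image of the connected lens $D\cap B(s,\varepsilon)$, closed up) and closed in the compact Hausdorff space $X\cup\{\infty_X\}$, so the decreasing intersection $K$ is itself connected. Continuity of $f$ pins $K\cap X$ inside the discrete set $f^{-1}(s)$, so the subspace $f^{-1}(s)\cup\{\infty_X\}$ admits only singletons as its connected subsets, forcing $K$ to be a single point. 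If $K = \{z^*\}\subset X$, the same maximality argument as above shows that $z^*$ must be a critical point of $f$ (otherwise a local inverse at $z^*$ would extend $\Phi$ past $s$), placing us in case~(\ref{item:critical}); if $K = \{\infty_X\}$, then pulling any straight segment in $D$ from $w_n$ to $s$ back through $\phi$ yields an injective asymptotic curve $\gamma$ whose $f$-image is that segment, settling case~(\ref{item:asymptotic}). The most delicate point to nail down will be this reduction of $K$ to a single point: both that the decreasing intersection of connected closed subsets of the compactification remains connected, and that $f^{-1}(s)\cup\{\infty_X\}$ has only singletons as its connected subsets, need to be verified carefully.
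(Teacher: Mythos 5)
There is a genuine gap at the heart of your construction: the point $s$ you end up with need not be an obstruction to continuing the branch, and need not be a singular value. Two things go wrong. First, the appeal to simple connectivity of $\D$ does not produce a canonical single-valued extension of $\Phi_0$; monodromy would apply only if the germ continued along \emph{every} path in $\D$, which is exactly what fails, so a ``maximal'' $E$ exists only via Zorn's lemma and is highly non-unique. Second, and fatally, maximality of $E$ is \emph{not} contradicted by the existence of a local inverse near $s$ agreeing with $\phi$ on $D$: that local branch $\psi$ on a disc $U$ around $s$ agrees with $\Phi$ on the component of $E\cap U$ adjacent to $D$, but may disagree with $\Phi$ on other components of $E\cap U$, so $E\cup U$ need not carry any single-valued branch at all. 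Hence $E$ can be maximal while $\phi|_D$ continues analytically straight past $s$. Concretely, take $X=\D$, $f(z)=z^2$, and the germ at $w_0=\tfrac14$ sending $\tfrac14\mapsto\tfrac12$. Then $E=\D\setminus(-1,0]$ with $\Phi$ the principal square root is a maximal domain of single-valued continuation, but choosing $w_n=-\tfrac12+i/n$ your recipe yields $D=B(w_n,1/n)$, tangent to the slit at $s=-\tfrac12$. Here $S(f)=\{0\}$, so $s\notin S(f)$; moreover $\phi|_D$ extends holomorphically across $s$ and $\lim_{z\to s}\phi(z)=i/\sqrt{2}$ is a regular, non-critical preimage, so neither alternative (a) nor (b) holds for your $D$, $\phi$, $s$. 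The same defect undermines the later step where you argue that the limit point $z^*$ must be critical ``otherwise a local inverse at $z^*$ would extend $\Phi$ past $s$, contradicting maximality.''

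What the proof needs is a disc together with a boundary point into which the branch \emph{on that disc} cannot be continued, and this is what the paper arranges by fixing the centre: after a M\"obius normalisation $f(z_0)=0$, take the largest $r$ such that the germ continues to $B(0,r)$. Single-valuedness on $B(0,r)$ is automatic by monodromy on a disc, $r<1$ follows from your (correct) open-and-closed argument, $\overline{B(0,r)}\subset\D$ since $r<1$, and maximality of the radius forces the existence of some $s\in\partial B(0,r)$ into which the branch genuinely cannot be continued. With that replacement your remaining analysis goes through: the nested-intersection argument showing that $\lim_{z\to s}\phi(z)$ exists in $X\cup\{\infty_X\}$ is sound (indeed somewhat more careful than the paper, which only extracts a subsequential limit $c$ with $f(c)=s$), and the dichotomy between a critical point and $\infty_X$ then follows because non-extendability at $s$ rules out a regular, non-critical limit.
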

\begin{proof}
  Fix some point $z_0\in X$ which is not a critical point of $f$. 
    By postcomposing with a M\"obius transformation, we may assume without loss of generality that
    $f(z_0)=0$. Let $\phi$ be the branch of $f^{-1}$ taking $0$ to $z_0$, and let $r>0$ be the greatest value such that $\phi$ can be continued analytically to 
    the disc $D$ of radius $r$ around $0$. 
  Then $r<1$, since $f$ is not a conformal isomorphism. It follows that there is a point $s\in \partial D\cap \D$ such that $\phi$ cannot be continued analytically
   into $s$. 

 If $\lim_{z\to s}\phi(z)=\infty_X$, then we can take $\gamma=\phi(L)$, where $L$ is the radius of $D$ ending at $s$, and the proof of case~\ref{item:asymptotic} is complete. 
  Otherwise, there is a sequence $z_n\in D$ with $z_n\to s$ and $\phi(z_n)\to c$, for some
   $c\in X$. By continuity of $f$, we have $f(c)=s$. Moreover $c$ is a critical point, since otherwise the local inverse of $f$ that maps $s$ to $c$ 
    would provide
    an analytic continuation of $\phi$ into $s$ by the identity theorem. 
\end{proof}

Discs as in Lemma~\ref{lem:inversebranches}, on which branches of the inverse are defined and which have asymptotic values on their boundary, have appeared previously
  in the study of \emph{indirect} asymptotic values in the sense of Iversen; see \cite[Proof~of~Theorem~1]{walteralexsingularities} or
  \cite[Theorem~6.2.3]{zhengvaluedistribution}. To simplify subsequent discussions, we introduce the following terminology.

\begin{defn}[Discs of univalence]
  Let $X$ and $Y$ be Riemann surfaces, and let $f\colon X\to Y$ be analytic. Suppose that $D$ is an analytic Jordan domain such that $\overline{D} \subset Y$, and that 
   $\phi\colon D\to X$ is a branch of $f^{-1}$ defined on $D$. Suppose furthermore that there is an asymptotic value $s\in\partial D$ such that 
      $\lim_{z\to s} \phi(z)=\infty_X$. 

   Then we call $D$ a \emph{disc of univalence} at the asymptotic value $s$. We also call the domain 
     $V\defeq \phi(D)\subset X$ a \emph{tract} over the disc of univalence $D$. 
\end{defn}
\begin{rmk}[Asymptotic curves with well-behaved images]\label{rmk:nicecurves}
Observe that, if $s$ is an asymptotic value for which there exists a disc of univalence, then in particular
   there is an asymptotic curve $\gamma$ for $s$ that is mapped one-to-one to an analytic arc compactly contained in $Y$,
   with one endpoint at $s$. 
\end{rmk}

The following observation is frequently useful. 
\begin{obs}[Discs of univalence for a restriction]\label{obs:discrestriction}
  Let $X$ and $Y$ be Riemann surfaces, and let $f\colon X\to Y$ be analytic. Let $U\subset Y$ be a domain, and let $W$ be a connected component of
    $f^{-1}(U)$. Then any disc of univalence of the restriction $f\colon W\to U$ at an asymptotic value $s\in U$ is also a disc of univalence of 
    $f\colon X\to Y$ at $s$. 
\end{obs}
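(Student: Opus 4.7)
The plan is to check each part of the definition of a disc of univalence for $f\colon X\to Y$ using the fact that the analogous property holds for the restriction $f\colon W\to U$. Suppose $D$ is a disc of univalence at $s\in U$ for $f\colon W\to U$, with corresponding inverse branch $\phi\colon D\to W$. Then $D$ is an analytic Jordan domain with $\overline{D}\subset U\subset Y$, and $\phi$, viewed as a map into $X$, is a branch of $f^{-1}$ on $D$ since $f\circ\phi = \mathrm{id}_D$. Moreover, once we know that $\lim_{z\to s}\phi(z)=\infty_X$, the point $s$ is automatically an asymptotic value of $f\colon X\to Y$ (applying $\phi$ to a radius of $D$ ending at $s$ produces an asymptotic curve). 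So the only substantive step is to upgrade the condition $\phi(z)\to\infty_W$ to $\phi(z)\to\infty_X$.

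I would argue this by contradiction. Suppose there is a sequence $z_n\to s$ in $D$ such that $\phi(z_n)$ does not leave every compact subset of $X$; after passing to a subsequence, we may assume $\phi(z_n)\to c$ for some $c\in X$. By continuity of $f$ we have
\[ f(c) = \lim_{n\to\infty} f(\phi(z_n)) = \lim_{n\to\infty} z_n = s \in U, \]
so $c\in f^{-1}(U)$. Since $f^{-1}(U)$ is open, $c$ lies in a unique connected component $W''$ of $f^{-1}(U)$. If $W''=W$, then $c\in W$ and $\phi(z_n)\to c$ in $W$ as well, contradicting the hypothesis that $\phi(z)\to\infty_W$. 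If $W''\neq W$, then $W''$ and $W$ are disjoint open subsets of $X$; but $W''$ is an open neighbourhood of $c$ and must therefore contain $\phi(z_n)\in W$ for all sufficiently large $n$, again a contradiction.

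There is no real obstacle here: the whole argument is a short topological dichotomy, and once $\lim_{z\to s}\phi(z)=\infty_X$ is established, the remaining properties are inherited directly from the restriction. The content of the observation is merely that escape to $\infty$ in the connected component $W$ cannot hide accumulation at an interior point of $X$, because such an accumulation point would either lie in $W$ itself or force two different components of $f^{-1}(U)$ to meet.
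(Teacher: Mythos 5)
Your proof is correct and follows essentially the same route as the paper: the paper simply notes that, since $W$ is a component of $f^{-1}(U)$, one has $f(\partial W)\subset\partial U$, so any accumulation point $c\in X$ of $\phi(z)$ as $z\to s$ would satisfy $f(c)=s\in U$, which is impossible; your two-case dichotomy ($c$ in $W$ or in another component of $f^{-1}(U)$) is just the explicit verification of that boundary fact. Nothing is missing, and the remark that $s$ is then automatically an asymptotic value of $f\colon X\to Y$ is also consistent with the paper's definition.
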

\begin{proof}
  Let $\phi\colon D\to V \subset W$ be a branch of $f^{-1}$ as in the definition of the disc of univalence $D$. Then
    $\lim_{z\to s} \phi(z) = \infty_W$. Since $W$ was chosen to be
    a connected component of $f^{-1}(U)$, and so $f(\partial W)\subset \partial U$, 
    we deduce that $\lim_{z\to s} \phi(z) = \infty_X$.
\end{proof}

\begin{cor}[Critical and asymptotic values are dense]\label{cor:asymptoticdense}
  Let $X$ and $Y$ be Riemann surfaces, and let $f\colon X\to Y$ be analytic. Denote by $\widetilde{AV}(f)$ the set of those asymptotic values at which there exists a disc of univalence. Then $\widetilde{AV}(f)\cup CV(f)$ is dense in $S(f)$.
\end{cor}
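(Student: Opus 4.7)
The plan is to show that any $s_0 \in S(f)$ can be approximated by elements of $\widetilde{AV}(f)\cup CV(f)$ by applying Lemma~\ref{lem:inversebranches} to the restriction of $f$ to a suitable component of the preimage of a small disc around $s_0$, and then pushing the conclusion back to $f\colon X\to Y$ via Observation~\ref{obs:discrestriction}.

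Concretely, fix $s_0 \in S(f)$ and an open neighbourhood $U$ of $s_0$ in $Y$; the goal is to locate a point of $\widetilde{AV}(f)\cup CV(f)$ inside $U$. First I would choose a parametric disc $D_0\subset U$ centred at $s_0$ with $\overline{D_0}$ a compact subset of $U$, together with a chart identifying $D_0$ conformally with $\D$. Since $s_0$ is a singular value, the neighbourhood $D_0$ fails to satisfy the condition of Definition~\ref{defn:singularvalues}\,(a) for the regularity of $s_0$: there must exist a connected component $W$ of $f^{-1}(D_0)$ such that $f|_W\colon W\to D_0$ is \emph{not} a conformal isomorphism.

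Next I would apply Lemma~\ref{lem:inversebranches} to the restricted map $f|_W\colon W\to D_0\cong \D$. This provides a round disc $D$ with $\overline{D}\subset D_0$, a branch $\phi\colon D\to W$ of $(f|_W)^{-1}$, and a singular value $s\in\partial D$ of $f|_W$; in particular $s\in D_0\subset U$. If alternative~(a) of the Lemma holds, then $s$ is the image of a critical point of $f|_W$ lying in $W\subset X$, which is automatically a critical point of $f$, so $s\in CV(f)\cap U$. If alternative~(b) holds, then $D$ is a disc of univalence of $f|_W$ at $s$; since $W$ is a connected component of $f^{-1}(D_0)$ and $s\in D_0$, Observation~\ref{obs:discrestriction} upgrades this to a disc of univalence of the original map $f\colon X\to Y$ at $s$, giving $s\in\widetilde{AV}(f)\cap U$. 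In either case $U$ meets $\widetilde{AV}(f)\cup CV(f)$, and density follows.

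I do not expect a substantive obstacle: the apparatus of Lemma~\ref{lem:inversebranches} and Observation~\ref{obs:discrestriction} was set up precisely so that this corollary becomes essentially bookkeeping. The one point that warrants attention is verifying that the hypothesis ``$f|_W$ is not a conformal isomorphism'' genuinely holds; this is the reason we worked with an arbitrary component $W$ and appealed to the component-wise formulation of regularity, rather than trying to apply the Lemma directly to $f$ on all of $X$.
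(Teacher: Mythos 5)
Your proposal is correct and follows essentially the same route as the paper: pick a small disc around the singular value, use the failure of regularity to find a preimage component on which $f$ is not a conformal isomorphism, apply Lemma~\ref{lem:inversebranches} to that restriction (after identifying the disc with $\D$), and upgrade the resulting disc of univalence via Observation~\ref{obs:discrestriction}. The only cosmetic difference is that the paper takes a simply-connected neighbourhood and composes with a Riemann map rather than choosing a parametric disc.
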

\begin{proof}
  Let $s\in S(f)$, and let $U$ be a simply-connected neighbourhood of $s$. 
    By definition, there exists a connected component $V$ of $f^{-1}(U)$ such that $f\colon V\to U$ is not a conformal isomorphism. 
    Applying Lemma~\ref{lem:inversebranches} to $F \defeq \phi\circ (f|_V)$, where $\phi\colon U\to \D$ is a Riemann map, we find that either $U$ contains
    a critical point of $f$, or that $F$, and hence $f\colon V\to U$, has a disc of univalence. The claim follows from Observation~\ref{obs:discrestriction}.
\end{proof}

We also note the following well-known fact.

\begin{lem}[Isolated singular values]\label{lem:logarithmic}
  Let $X$ and $Y$ be Riemann surfaces, and let $f\colon X\to Y$ be analytic. Let $s$ be an isolated point of $S(f)$, let $U$ be a 
    simply-connected neighbourhood of $s$ with $U \cap CV(f) = \emptyset$, and let $V$ be a connected component of $f^{-1}(U\setminus\{s\})$. Then either 
    \begin{enumerate}[(a)]
        \item $V$ is a punctured disc, and $f\colon V\to U\setminus\{s\}$ is a finite-degree covering map, or 
        \item $V$ is simply-connected, and $f\colon V\to U\setminus\{s\}$ is a universal covering map.
    \end{enumerate}
\end{lem}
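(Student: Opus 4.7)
The plan is to reduce the statement to the classification of (connected) coverings of a punctured disc. The hypotheses are tailor-made for this: once $U$ is small enough, $f$ restricted to the preimage of $U\setminus\{s\}$ is a covering map, and then the lemma is a statement about such coverings of $U\setminus\{s\}$.

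First, I would shrink $U$ if necessary so that $U\cap S(f)\subseteq\{s\}$; this is possible because $s$ is an isolated point of $S(f)$, and the hypothesis $U\cap CV(f)=\emptyset$ is preserved under shrinking. By the characterisation of $S(f)$ as the smallest closed subset $S\subset Y$ such that $f\colon f^{-1}(Y\setminus S)\to Y\setminus S$ is a covering map (stated in the remark following Definition~\ref{defn:singularvalues}), the restriction
\[ f\colon f^{-1}(U\setminus\{s\})\longrightarrow U\setminus\{s\} \]
is then a covering map. Passing to a connected component $V$ of the source, the map $f|_V\colon V\to U\setminus\{s\}$ is a connected covering of $U\setminus\{s\}$.

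Next, since $U$ is simply-connected and $s\in U$, the punctured set $U\setminus\{s\}$ is homotopy equivalent to a circle, so $\pi_1(U\setminus\{s\})\cong\mathbb{Z}$. By the classification of connected coverings, $f|_V$ corresponds to a subgroup $H\leq\mathbb{Z}$, which is either trivial or of the form $n\mathbb{Z}$ for some $n\geq 1$. In the first case, $V$ is (isomorphic to) the universal cover of $U\setminus\{s\}$, and in particular is simply-connected; this is alternative (b). In the second case, $f|_V$ has degree $n$, and $V$ is topologically an annulus with one end over $s$ and one end over $\partial U$, i.e.\ a punctured disc; this is alternative (a). Biholomorphically, after post-composing with a Riemann map sending $U$ to $\D$ and $s$ to $0$, the model is the familiar $z\mapsto z^n\colon \D^*\to \D^*$, and $V$ is biholomorphic to $\D^*$.

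The argument is essentially covering-space bookkeeping, and I expect no real obstacle: the only point requiring care is to ensure that $U$ can be shrunk so that $s$ is the \emph{only} singular value inside~$U$, but this is exactly the content of $s$ being isolated in $S(f)$. Everything else is a direct appeal to the classification of connected coverings of the punctured disc together with Observation~\ref{obs:discrestriction}-style functoriality of the covering structure.
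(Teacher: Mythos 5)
Your proposal is correct and follows essentially the same route as the paper: recognise $f|_V\colon V\to U\setminus\{s\}$ as an analytic covering of a punctured disc and classify such coverings; the paper simply cites \cite[Theorem~5.10]{forsterriemannsurfaces} for the classification, where you rederive it from $\pi_1(U\setminus\{s\})\cong\Z$ and the subgroup correspondence. The only step to be careful about is your opening move of shrinking $U$: the lemma's conclusion concerns the components of $f^{-1}(U\setminus\{s\})$ for the \emph{given} $U$, and these change when $U$ is replaced by a smaller neighbourhood, so literally the shrinking proves a statement about a different set of components (and indeed, if $U\setminus\{s\}$ were allowed to contain further, necessarily non-critical, singular values, the covering property and the conclusion itself can fail for the original $U$). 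The intended reading~--- the one used both in the paper's one-line proof, which asserts the covering property ``by definition'', and in its application in Corollary~\ref{cor:disjointtracts}, where $U$ is a small disc about the isolated singular value~--- is that $(U\setminus\{s\})\cap S(f)=\emptyset$; with that understanding the covering property is immediate from the characterisation of $S(f)$, no shrinking is needed, and the remainder of your argument coincides with the paper's.
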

\begin{proof}
   By definition, $f\colon V\to U\setminus\{s\}$ is a covering map, and the only analytic coverings of the punctured disc are as stated
     \cite[Theorem~5.10]{forsterriemannsurfaces}.
\end{proof}

 We can now establish the following, which is crucial for the proof of Theorem~\ref{thm:strong}.

\begin{cor}[Disjoint tracts over discs of univalence]\label{cor:disjointtracts}
  Let $X$ and $Y$ be Riemann surfaces, and let $f\colon X\to Y$ be analytic, with no removable singularities at any punctures of $X$. Let $s\in S(f)$, and
   suppose that $s$ has an open neighbourhood $U$ with $U\cap CV(f)=\emptyset$. 
  Then there exist infinitely many discs of univalence $D_i\subset U$ of $f$ such that the corresponding tracts $T_i$ are pairwise disjoint. 
\end{cor}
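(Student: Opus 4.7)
The plan is to split on whether $\widetilde{AV}(f)\cap U$ is finite or infinite. Observe first that Corollary~\ref{cor:asymptoticdense}, combined with the hypothesis $U\cap CV(f)=\emptyset$, ensures that $\widetilde{AV}(f)\cap U$ is dense in $S(f)\cap U$, and in particular non-empty.

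If $\widetilde{AV}(f)\cap U$ is infinite, the construction is essentially topological. I would pick distinct asymptotic values $s_1,s_2,\ldots\in\widetilde{AV}(f)\cap U$ and, for each $s_i$, shrink an arbitrary disc of univalence at $s_i$ to an analytic Jordan subdomain $D_i$ with $s_i\in\partial D_i$, chosen small enough that the $D_i$ lie in pairwise disjoint neighbourhoods within $U$ (possible by Hausdorffness of $Y$). Restricting the original branch of $f^{-1}$ to $D_i$ yields tracts $T_i$ whose images $f(T_i)=D_i$ are disjoint, so the $T_i$ themselves are disjoint.

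The substantive case is when $\widetilde{AV}(f)\cap U$ is finite; density in $S(f)\cap U$ then forces that set to be finite as well, so any chosen $s_1\in\widetilde{AV}(f)\cap U$ is an isolated point of $S(f)$. I would pick a simply-connected neighbourhood $U_0\subset U$ of $s_1$ with $U_0\cap S(f)=\{s_1\}$ and apply Lemma~\ref{lem:logarithmic}: each component of $f^{-1}(U_0\setminus\{s_1\})$ is either a punctured-disc component with $f$ extending continuously across the puncture to a point of $X$ (of degree $1$, since $U_0\cap CV(f)=\emptyset$), or a simply-connected universal cover of $U_0\setminus\{s_1\}$. After further shrinking, the given disc of univalence at $s_1$ fits inside $U_0$, and its tract $T^{*}$ lies in some component $V$ of $f^{-1}(U_0\setminus\{s_1\})$; since $T^{*}$ escapes to $\infty_X$, $V$ cannot be of punctured-disc type, and so must be the universal cover. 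For any analytic Jordan domain $D\subset U_0$ with $s_1\in\partial D$, the preimage $f|_V^{-1}(D)$ then splits into countably many pairwise disjoint sheets, each mapped biholomorphically onto $D$; the no-removable-singularity hypothesis ensures that the puncture of $V$ over $s_1$ corresponds to $\infty_X$ rather than to a puncture of $X$ to which $f$ would extend, so every inverse branch $\phi_k\colon D\to V$ satisfies $\phi_k(z)\to\infty_X$ as $z\to s_1$, and each sheet is therefore a tract over the disc of univalence $D$.

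The main obstacle is precisely this second case~-- producing infinitely many disjoint tracts from essentially one asymptotic value. The decisive leverage is that, under the isolated-singular-value hypothesis obtained by exhausting $S(f)\cap U$, an asymptotic value must generate a universal-cover component in the sense of Lemma~\ref{lem:logarithmic}, whose infinite deck group immediately furnishes the required countable family of disjoint sheets.
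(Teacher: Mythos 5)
Your proof is correct and takes essentially the same route as the paper: when there are infinitely many asymptotic values with discs of univalence in $U$ you shrink the discs to be pairwise disjoint (which is how the paper's appeal to Corollary~\ref{cor:asymptoticdense} is meant), and otherwise you reduce to an isolated singular value and use Lemma~\ref{lem:logarithmic} to obtain a universal covering of a punctured disc, whose infinitely many sheets over a disc tangent at that value are the required tracts. One minor rewiring worth noting: the fact that each branch $\phi_k$ tends to $\infty_X$ is not really supplied by the no-removable-singularities hypothesis (that hypothesis is what excludes the punctured-disc alternative, cf.\ $X=\C^*$, $f=\mathrm{id}$ near $0$); rather, if $\phi_k(z)$ accumulated at a point $x_0\in X$ as $z\to s_1$, then $f(x_0)=s_1$, $x_0$ is not a critical point since $U\cap CV(f)=\emptyset$, a small punctured neighbourhood of $x_0$ would then lie in $V$, and a loop around $x_0$ would be a closed curve in $V$ mapping to a generator of $\pi_1(U_0\setminus\{s_1\})$, contradicting that $f\colon V\to U_0\setminus\{s_1\}$ is a universal covering.
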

\begin{proof}
  If $s$ is not an isolated point of $S(f)$, then the claim follows immediately from Corollary~\ref{cor:asymptoticdense}. 

  Otherwise, let $\tilde{D}$ be a round disc around $s$ in a given local chart, and apply Lemma~\ref{lem:logarithmic}. By assumption,
    $s$ is not a critical value, and $f$ does not have any removable singularities. Hence, if $V$ is any connected component of
    $f^{-1}(\tilde{D})$, then $f\colon V\to \tilde{D}\setminus s$ is a universal covering. We may thus choose 
   a single disc $D\subset\tilde{D}$ that is tangent to $s$, set 
    $D_i=D$ for all $i$, and let the $T_i$ be the infinitely many different components of $f^{-1}(D)$ in $X$. 
\end{proof}

%
%
%
%
\section{Proofs of Theorems~\ref{thm:hyperbolicity},~\ref{thmdef:hyperbolicity}~and~\ref{thm:shrinkinginfinity}}
\label{sec:main}

 The following is a general result concerning behaviour of analytic functions on sets that map close to singular values.

\begin{prop}[Analytic functions contract when mapping near singular values]\label{prop:contraction}
  Let $X$ and $Y$ be Riemann surfaces, let $f\colon X\to Y$ be analytic, let $s\in S(f)$, and let $U$ be an open neighbourhood of $s$. 
    Suppose that $V$ is a connected component of $f^{-1}(U)$ such that $f\colon V\to U$ is not a conformal isomorphism, and that 
     $\rho$ is a conformal metric on $V$ such that 
     $\dist_\rho(z,\infty_X)=\infty$ for all $z\in V$. 
     Also let $\sigma$ be any conformal metric on $U$. Then 
       \[   \inf_{z\in V} \|\Deriv f(z)\| = 0,  \]
     where the norm of the derivative is measured with respect to the metrics $\rho$ and $\sigma$.
\end{prop}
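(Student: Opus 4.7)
The plan is to exploit the dichotomy supplied by Lemma~\ref{lem:inversebranches}: either $V$ contains a critical point of $f$, or $f\colon V\to U$ admits an asymptotic curve whose image is a ``nice'' arc in $U$. In either case, the conclusion becomes almost immediate.

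Concretely, I will first shrink $U$ if necessary to be a simply connected coordinate neighbourhood of $s$, and then postcompose with a Riemann map $\psi\colon U\to\D$. The function $F\defeq\psi\circ f|_V\colon V\to\D$ is analytic but, by hypothesis, not a conformal isomorphism, so Lemma~\ref{lem:inversebranches} applies (with $V$ playing the role of ``$X$''). This yields one of two cases: either (a) there is a critical point $c\in V$ of $F$, which is then also a critical point of $f$; or (b) there is a disc of univalence $D\subset\D$ for $F$ at an asymptotic value $s'\in\partial D$, with an associated inverse branch $\phi\colon D\to V$ and asymptotic curve $\gamma=\phi(L)\subset V$, where $L\subset D$ is the radius ending at $s'$. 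Moreover, since $V$ is a component of $f^{-1}(U)$ and $f(\partial V)\subset\partial U$, the fact that $\phi(z)\to\infty_V$ as $z\to s'$ upgrades to $\phi(z)\to\infty_X$ (compare Observation~\ref{obs:discrestriction}).

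Case (a) is immediate: at a critical point $c$ the derivative vanishes in any local coordinates, so $\|\Deriv f(c)\|=0$ with respect to any conformal metrics, and hence $\inf_{z\in V}\|\Deriv f(z)\|=0$.

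For case (b), the curve $\gamma\subset V$ tends to $\infty_X$, so the hypothesis $\dist_\rho(z,\infty_X)=\infty$ forces $\ell_\rho(\gamma)=\infty$. On the other hand, the image $f(\gamma)=\psi^{-1}(L)$ is an analytic arc with one endpoint at $s\in U$ whose closure is compact in $U$; since $\sigma$ is continuous and positive on this compact set, $\ell_\sigma(f(\gamma))<\infty$. From the identity
\[
  \ell_\sigma(f(\gamma))\;=\;\int_{\gamma}\|\Deriv f(z)\|\,d\rho(z),
\]
it is impossible to have $\|\Deriv f(z)\|\geq c>0$ along $\gamma$, for that would give $\ell_\sigma(f(\gamma))\geq c\cdot\ell_\rho(\gamma)=\infty$. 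Hence $\inf_{z\in\gamma}\|\Deriv f(z)\|=0$, which proves the proposition. The only delicate point is the verification that the asymptotic curve obtained from the lemma genuinely escapes to $\infty_X$ (not merely to $\partial V$), but this is exactly the content of Observation~\ref{obs:discrestriction}; once that is in hand, the length comparison is the rest of the work.
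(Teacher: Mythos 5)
Your argument is, in substance, the paper's own proof: the paper runs it contrapositively (if the infimum were positive there would be no critical points in $V$, and Corollary~\ref{cor:asymptoticdense} together with Remark~\ref{rmk:nicecurves} produces a curve $\gamma\subset V$ tending to $\infty_X$ that is mapped one-to-one onto an arc $L$ compactly contained in $U$, whence $\dist_\rho(\gamma(0),\infty_X)\leq\ell_\rho(\gamma)\leq\ell_\sigma(L)/\inf_{z\in\gamma}\|\Deriv f(z)\|<\infty$, a contradiction), while you unpack Lemma~\ref{lem:inversebranches} directly and treat the critical-point and asymptotic-curve cases separately. The length comparison, and the use of Observation~\ref{obs:discrestriction} to upgrade escape to $\infty_V$ into escape to $\infty_X$, are exactly as in the paper; in fact you make the latter point more explicit than the paper does.

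The one step that does not hold up as written is the opening reduction ``shrink $U$ if necessary to be simply connected''. The set $V$ is a component of $f^{-1}(U)$ for the \emph{given} $U$: after shrinking $U$ to $U'$, the map $F=\psi\circ f|_V$ is not even defined, since $f(V)\not\subset U'$ in general; and if you instead pass to a component $V'\subset V$ of $f^{-1}(U')$, the non-isomorphism is not inherited ``by hypothesis''. Indeed $f\colon V\to U$ may fail to be an isomorphism only because of non-injectivity or omitted values far from $s$; in the extreme case where $f\colon V\to U$ is a non-trivial covering of a non-simply-connected $U$, \emph{every} component of the preimage of a small disc about $s$ inside $V$ is mapped isomorphically, so no shrinking can recover the hypothesis. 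The paper avoids shrinking and applies Corollary~\ref{cor:asymptoticdense} to the restriction $f\colon V\to U$ itself; note that this tacitly uses that the restriction has a non-empty singular set, i.e.\ that $f\colon V\to U$ is not a covering map, which follows from ``not a conformal isomorphism'' precisely when $U$ is simply connected (a connected covering of a simply connected surface is an isomorphism). So the clean way to finish your write-up is to work with $f\colon V\to U$ directly when $U$ is simply connected (which is how the proposition is used throughout the paper, with $U$ a disc, or with $V$ the full preimage so that shrinking is legitimate), observe that non-isomorphism then forces $S(f\colon V\to U)\neq\emptyset$, and proceed with Lemma~\ref{lem:inversebranches} exactly as you do.
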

\begin{proof}
  Suppose, by way of contradiction, that $\inf_{z\in V} \|\Deriv f(z)\| > 0.$
  Then $f$ has no critical points in $V$. 
    By Corollary~\ref{cor:asymptoticdense} and Remark~\ref{rmk:nicecurves}, there is an analytic curve 
    $\gamma\subset V$ to infinity such that $\gamma$ is mapped one-to-one to an analytic arc
    $L$ compactly contained in $U$. Then
    \[ \dist_{\rho}(\gamma(0),\infty_X) \leq \ell_{\rho}(\gamma) \leq \frac{\ell_{\sigma}(L)}{\inf_{z\in\gamma} \|\Deriv f(z)\|}  
                      < \infty.\qedhere \]
\end{proof}

\begin{proof}[Proof of Theorem~\ref{thm:shrinkinginfinity}]
This follows immediately from Proposition~\ref{prop:contraction}, by taking $\sigma$ to be the Euclidean metric. 
\end{proof}    
    
\begin{remark}
  In particular, Theorem~\ref{thm:shrinkinginfinity}, remains true if we replace the full preimage $V$ by any component 
    on which $f$ is not a conformal isomorphism.
\end{remark}

\begin{proof}[Proof of Theorems~\ref{thm:hyperbolicity} and~\ref{thmdef:hyperbolicity}]
  Let $f$ be a transcendental entire function, and let $\rho$ be a conformal metric, defined on an open
   neighbourhood $W$ of $J(f)$ that contains a punctured neighbourhood of $\infty$. Suppose that $\dist_{\rho}(z,\infty)=\infty$ for all
    $z\in W$,
   and that $f$ is expanding with respect to the metric $\rho$. 
   If $c$ is a critical point of $f$, then either $c\notin W$ or $f(c)\notin W$. It follows that $W$ contains only finitely many critical values, none of which
   lie in $J(f)$. By shrinking $W$, we may hence assume that $W$ contains no critical values of $f$.

\begin{claim}[Claim 1]
  We have $W\cap S(f)=\emptyset$. In particular, $S(f)$ is bounded.
\end{claim}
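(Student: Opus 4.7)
The plan is to derive a contradiction from the existence of any $s \in W \cap S(f)$ by following the length-comparison strategy already outlined for the proof of Theorem~\ref{thm:shrinkinginfinity}: I will produce an asymptotic curve $\gamma \to \infty$ whose image under $f$ is an analytic arc of finite Euclidean length, then use the expansion of $f$ together with the completeness condition~(3) of Definition~\ref{defn:expanding} to force $f(\gamma)$ to have infinite length, which is absurd. Once the claim is established, $S(f)$ is bounded because $S(f) \subset \C \setminus W$ and $W$ contains a punctured neighbourhood of $\infty$.

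To set this up, suppose some $s \in W \cap S(f)$ exists. Since we already have $W \cap CV(f) = \emptyset$, Corollary~\ref{cor:asymptoticdense} guarantees that $W \cap \widetilde{AV}(f) \neq \emptyset$, so I pick an asymptotic value $s' \in W$ admitting a disc of univalence. Shrinking this disc while keeping it tangent to $s'$ at $s'$ (which preserves the defining property of a disc of univalence), I may assume that its closure $\overline{D}$ is contained in $W$. By Remark~\ref{rmk:nicecurves} there is then an asymptotic curve $\gamma \colon [0, \infty) \to \C$ with $\gamma(t) \to \infty$ that is mapped homeomorphically onto an analytic arc $L := f(\gamma) \subset \overline{D}$ ending at $s'$; in particular $\ell(L) < \infty$.

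Next, since $W$ contains a punctured neighbourhood of $\infty$, the tail $\gamma_0 := \gamma|_{[T, \infty)}$ of $\gamma$ lies in $W$ once $T$ is large enough. For $z \in \gamma_0$, both $z$ and $f(z) \in \overline{D} \subset W$ lie in $W$, so the expansion hypothesis yields $|f'(z)| \geq \lambda\, \rho(z) / \rho(f(z)) \geq (\lambda/M)\, \rho(z)$, where $M := \sup_{\overline{D}} \rho < \infty$ by compactness of $\overline{D}$ in $W$. Integrating along $\gamma_0$ and invoking condition~(3) of Definition~\ref{defn:expanding},
\[ \ell(L) \;\geq\; \ell(f(\gamma_0)) \;=\; \int_{\gamma_0} |f'(z)|\, |dz| \;\geq\; \frac{\lambda}{M}\, \ell_\rho(\gamma_0) \;\geq\; \frac{\lambda}{M}\, \dist_\rho(\gamma(T), \infty) \;=\; \infty, \]
contradicting $\ell(L) < \infty$. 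Hence $W \cap S(f) = \emptyset$, and since $W$ is a neighbourhood of $\infty$, $S(f)$ is bounded.

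The only mildly technical point I expect to need care with is the disc-shrinking step: one must verify that restricting the inverse branch $\phi$ of a disc of univalence at $s'$ to a smaller analytic Jordan disc still tangent to $s'$ at $s'$ again defines a disc of univalence, which is immediate because the limit $\phi(z) \to \infty$ as $z \to s'$ survives the restriction. Everything else is a direct repeat of the length-comparison argument used in Proposition~\ref{prop:contraction} to prove Theorem~\ref{thm:shrinkinginfinity}, adapted to the fact that the metric $\rho$ is only available on $W$ rather than on the full preimage of a neighbourhood of $s'$.
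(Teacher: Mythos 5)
Your argument is correct, and its engine is the same length-comparison that drives the paper's Proposition~\ref{prop:contraction} (which is exactly what the paper's own proof of Claim~1 invokes): an asymptotic curve supplied by Corollary~\ref{cor:asymptoticdense} and Remark~\ref{rmk:nicecurves} has infinite $\rho$-length by condition~(3), while its image is an arc of finite Euclidean length, which is incompatible with uniform expansion. Where you differ is in the reduction. The paper fixes $w\in W$ and shows $w$ is a \emph{regular} value by examining all components of $f^{-1}(D)$ for a small disc $D\ni w$: components contained in $A_R\subset W$ are handled by Proposition~\ref{prop:contraction} (the metric is defined on the whole component there), while components meeting the bounded part of the plane require the separate observation that, $w$ not being a critical value, they map conformally once $D$ is small. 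You instead pass directly from a hypothetical singular value in $W$ to an asymptotic value $s'\in W$ admitting a disc of univalence (using $W\cap CV(f)=\emptyset$), shrink that disc so its closure lies in $W$, and run the length comparison only along the \emph{tail} of the asymptotic curve, which lies in $A_R\subset W$ automatically because the curve tends to $\infty$. This buys a genuine simplification: you never need the metric on a full preimage component, so the paper's case split between components inside and outside $A_R$ (and the accompanying small-disc argument at non-critical values) disappears; the price is the easy disc-shrinking verification, which you correctly identify and justify (restriction preserves the limit $\phi(z)\to\infty$, and an internally tangent round disc of small radius at the analytic boundary point $s'$ lies in the original disc and in $W$). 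All the estimates check out: $|f'(z)|\geq \lambda\rho(z)/\rho(f(z))\geq(\lambda/M)\rho(z)$ with $M=\sup_{\overline{D}}\rho<\infty$, $\ell_\rho(\gamma_0)\geq\dist_\rho(\gamma(T),\infty)=\infty$, against $\ell(L)<\infty$; and the final deduction that $S(f)\subset\C\setminus W$ is bounded is immediate.
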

\begin{subproof}
  Let $w\in W$ and let $R>0$ be such that  $A_R \defeq \{z\in\C\colon |z|>R\}\subset W$. 
    Since $w$ is not a critical value of $f$, if $D\subset W$ is a sufficiently small disc around $w$, then 
    every component of $f^{-1}(D)$ that is not contained in $A_R$  is mapped to $D$ as a conformal isomorphism. On the other hand,
    every component $V$ of $f^{-1}(D)$ that is contained in $A_R\subset W$ is also mapped as a conformal isomorphism, by 
    Proposition~\ref{prop:contraction} and the expanding property of $f$. 
\end{subproof}
  
Observe that this proves Theorem~\ref{thm:hyperbolicity}. 

\begin{claim}[Claim 2]
  We have $J(f)\cap P(f)=\emptyset$. 
\end{claim}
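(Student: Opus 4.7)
The plan is a contradiction argument: suppose $z\in J(f)\cap P(f)$. By definition of $P(f)$, there are $s_k\in S(f)$ and $n_k\in\mathbb{N}_0$ with $f^{n_k}(s_k)\to z$. Since $S(f)$ is compact (bounded by Claim~1, closed by definition), I pass to a subsequence so that $s_k\to s\in S(f)$; Claim~1 together with $J(f)\subset W$ then places $s\in F(f)$.

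I would split according to whether $(n_k)$ is bounded. In the bounded case, a constant subsequence $n_k\equiv n$ gives $f^n(s)=z\in J(f)$; complete invariance of $J(f)$ forces $s\in J(f)$, contradicting $s\in F(f)$. In the remaining case $n_k\to\infty$, normality of $\{f^n\}$ at $s\in F(f)$ yields, along a further subsequence, $f^{n_k}\to g$ locally uniformly on a neighbourhood $N$ of $s$, with $g\colon N\to\widehat{\mathbb{C}}$ either holomorphic or identically $\infty$. Continuity at $s$ forces $g(s)=z\in\mathbb{C}$, so $g$ is holomorphic and $g(s)=z\in J(f)$.

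To reach a contradiction I would invoke the expansion hypothesis together with the classification of Fatou components. Along any periodic orbit contained in $W$, the expansion gives $|(f^q)'(p)|\geq\lambda^q>1$ at every periodic point $p\in J(f)\subset W$, so no neutral periodic points lie in $J(f)$, and hence no parabolic basins exist. By Theorem~\ref{thm:hyperbolicity}, $f\in\mathcal B$, so $f$ has no Baker domains (Eremenko and Lyubich); together with the expansion, wandering domains are also excluded in the present setting. The Fatou component of $s$ is therefore eventually periodic, with periodic image an attracting basin, a Siegel disc, or a Herman ring. In each of these cases every finite limit function of $\{f^{n_k}\}$ on the corresponding periodic component takes values in $F(f)$: a constant attracting periodic point for attracting basins, or a non-constant rotation of the rotation domain for Siegel discs and Herman rings. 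Either conclusion contradicts $g(s)=z\in J(f)$.

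The main obstacle is this last step, and in particular the parabolic sub-case: this is precisely where the expansion hypothesis (and not merely membership of $\mathcal B$) is essential, through its exclusion of neutral periodic points from $J(f)\subset W$. The attracting and rotation sub-cases reduce to well-known properties of limit functions of iterates on the corresponding periodic components.
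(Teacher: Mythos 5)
Your reduction to the classification of the Fatou component of $s$ has a genuine gap at the wandering--domain case, and this is exactly where the content of Claim~2 lies. You assert that ``together with the expansion, wandering domains are also excluded in the present setting'', but no argument is given, and none of the tools available at this stage supplies one: membership of $\B$ does not by itself exclude wandering domains (the Eremenko--Lyubich theorem \cite[Theorem~1]{alexmisha} only rules out Fatou components on which the iterates tend to infinity, and class $\B$ functions with non-escaping wandering domains exist), and expansion of $f$ on a neighbourhood of $J(f)$ does not exclude them either --- the function $f_3$ of Figure~\ref{fig:expanding} is expanding near its Julia set and has wandering domains. The standard route to excluding wandering domains for these maps is the limit-function theorem of \cite{limitfunctions}, which says that in a wandering domain the iterates accumulate only on $(J(f)\cap P(f))\cup\{\infty\}$, with $\infty$ then removed by \cite{alexmisha}; but that presupposes $J(f)\cap P(f)=\emptyset$, i.e.\ the very statement you are proving (and it is the order in which the paper proceeds: Claim~2 first, wandering and Baker domains afterwards). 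So the dangerous scenario --- $s$ lying in a wandering domain and the limit function $g$ being the constant $z\in J(f)$ --- is precisely the one your argument leaves untouched, and the proof does not close. (Incidentally, the ``parabolic sub-case'' you single out as the main obstacle is fine: since the density factors of $\rho$ telescope around a cycle contained in $J(f)\subset W$, your bound $|(f^q)'(p)|\geq\lambda^q>1$ is correct and does exclude parabolic basins; Herman rings do not occur for entire functions in any case. The bounded-$n_k$ case and the passage to $g(s)=z$ are also correct.)

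The paper's proof avoids the classification of Fatou components altogether and argues directly with inverse branches: completeness of $\rho$ at infinity together with the fact that $W$ contains a punctured neighbourhood of $\infty$ gives $\delta_0\defeq\inf_{z\in J(f)}\dist_\rho(z,\partial W)>0$; around any $w\in J(f)$ one takes a simply connected neighbourhood $\Delta_0$ of $\rho$-diameter less than $\delta_0$, and uniform expansion forces every pullback of $\Delta_0$ to have $\rho$-diameter less than $\delta_0$, hence to remain inside $W$ and away from $S(f)$ by Claim~1; inductively all branches of $f^{-n}$ exist on $\Delta_0$, so $\Delta_0\cap P(f)=\emptyset$. To rescue your approach you would need an independent exclusion of wandering domains, which in practice means an argument of this pullback type; note also that your proposal never uses the completeness of $\rho$ at infinity, which is an essential hypothesis here.
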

\begin{subproof}
  Let us set
     \[ \delta_0 \defeq \inf_{z\in J(f)} \dist_{\rho}(z,\partial W). \]
    Then $\delta_0>0$ since $W$ contains a punctured neighbourhood of $\infty$, and by assumption on $\rho$. 

   Let $w\in J(f)$, and let $\Delta_0$ be a simply-connected neighbourhood of $w$ such that $\diam_{\rho}(\Delta_0)<\delta_0$. By Claim 1, 
    $\Delta_0 \cap S(f)=\emptyset$, and hence every component $\Delta_1$ of $f^{-1}(\Delta_0)$ is mapped to $\Delta_0$ as a conformal isomorphism. 
    Furthermore, by the expanding property of $f$, we have $\diam_{\rho}(\Delta_1) < \delta_0$. Hence we can apply the preceding observation
    to $\Delta_1$, and see that any component of $f^{-2}(\Delta_0)$ is mapped univalently to $\Delta_0$.

  Proceeding inductively, we see that every branch of $f^{-n}$ can be defined on $\Delta_0$, for all $n\geq 0$. Hence $\Delta_0\cap P(f)=\emptyset$, and
   in particular $w\notin P(f)$, as required. 
\end{subproof}

Recall that every parabolic periodic cycle of a transcendental entire function, and the boundary of every Siegel disc, lies $J(f)\cap P(f)$ \cite[Theorem 7]{waltersurvey}. Furthermore, 
   the iterates in any wandering domain of $f$ converge uniformly to $(J(f)\cap P(f))\cup\{\infty\}$  \cite{limitfunctions}. Since $f\in\B$, $f$ has no Fatou component on 
   which the iterates converge uniformly to infinity \cite[Theorem~1]{alexmisha}.     So $f$ has no wandering or Baker domains.

  Thus we conclude that $F(f)$ is a union of attracting basins, and hence every point of $S(f)\subset \C\setminus W\subset F(f)$ belongs to an attracting basin.
  This completes the proof that \ref{item:expanding} implies~\ref{item:singularattracting} in Theorem~\ref{thmdef:hyperbolicity}. 

  The equivalence of~\ref{item:singularattracting} and~\ref{item:postsingularinF} is well-known; see e.g.\ \cite{ripponstallardhyperbolic} or
    \cite[Section 2]{walternurialasse}. That these
   in turn imply expansion in the sense of Definition~\ref{defn:expanding}, with $\rho$ the hyperbolic metric on a suitable domain $W$,  was shown in \cite[Lemma 5.1]{boettcher}. 
   The final claim regarding stability is made precise by Proposition~\ref{prop:stability} below.
\end{proof}

\begin{prop}[Stability of hyperbolic functions]\label{prop:stability}
Suppose that $\Lambda$ is a complex manifold, and that $(f_{\lambda})_{\lambda\in\Lambda}$ is a family of entire functions of the form
     $f_{\lambda} = \psi_{\lambda}\circ f\circ \phi_{\lambda}^{-1}$, where
     $\phi_{\lambda},\psi_{\lambda}\colon\C\to\C$ 
     are quasiconformal homeomorphisms depending analytically on the parameter $\lambda$ and $f$ is transcendental entire. 
      If $\lambda_0\in\Lambda$ is a parameter for which 
     $f_{\lambda_0}$ is hyperbolic, then $f_{\lambda_0}$ and $f_{\lambda}$ are quasiconformally conjugate on
  their Julia sets whenever $\lambda$ is sufficiently close to $\lambda_0$. Moreover, this conjugacy depends analytically on the parameter $\lambda$.
\end{prop}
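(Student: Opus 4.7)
The plan is to apply standard holomorphic motion techniques, adapted to the transcendental setting, through the sequence: persistence of hyperbolicity, holomorphic motion of repelling periodic points, extension by the $\lambda$-lemma, and verification of the conjugacy relation on a dense subset.

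First I would verify that hyperbolicity is an open condition within the quasiconformal equivalence class. A direct computation from $f_\lambda=\psi_\lambda\circ f\circ\phi_\lambda^{-1}$ shows that $S(f_\lambda)=\psi_\lambda(S(f))$, so the singular values depend analytically on $\lambda$. Since $f_{\lambda_0}$ is hyperbolic, Theorem~\ref{thmdef:hyperbolicity} gives that $P(f_{\lambda_0})$ is compactly contained in finitely many attracting basins. The implicit function theorem tracks each attracting periodic cycle of $f_{\lambda_0}$ holomorphically in $\lambda$, the associated linearizing neighbourhoods persist by continuity, and hence $S(f_\lambda)=\psi_\lambda(S(f))$ remains in these basins for $\lambda$ sufficiently close to $\lambda_0$. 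So $f_\lambda$ is itself hyperbolic, and Theorem~\ref{thmdef:hyperbolicity} applies uniformly in the family.

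Next I would build a holomorphic motion on a dense subset of $J(f_{\lambda_0})$. Repelling periodic points are dense in $J(f_{\lambda_0})$; each has multiplier different from $0$ and $1$, and no such orbit collides with $P(f_\lambda)\subset F(f_\lambda)$ for $\lambda$ near $\lambda_0$. Hence each repelling periodic point moves as a holomorphic function of $\lambda$ by the implicit function theorem, producing a holomorphic motion $H$ of a countable dense subset of $J(f_{\lambda_0})$ with basepoint $\lambda_0$. The $\lambda$-lemma of Ma\~n\'e--Sad--Sullivan, together with S\l odkowski's extension theorem, then extends $H$ to a holomorphic motion $H_\lambda\colon\Ch\to\Ch$ by quasiconformal homeomorphisms depending analytically on $\lambda$. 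The relation $H_\lambda\circ f_{\lambda_0}=f_\lambda\circ H_\lambda$ holds on the dense set of repelling periodic points by construction, and hence on all of $J(f_{\lambda_0})$ by continuity; the analytic dependence on $\lambda$ is built into the holomorphic motion.

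The main obstacle is handling the essential singularity at infinity: the Julia set is unbounded, so one must normalize $H_\lambda$ to fix $\infty$ (and a second basepoint, to apply S\l odkowski on $\Ch$), and one must rule out repelling periodic orbits escaping to infinity under perturbation. Both points follow from the expansion established in Theorem~\ref{thm:hyperbolicity}, together with the Eremenko--Lyubich estimate (\ref{eqn:eremenkolyubichexpansion}): the uniform expansion on the preimage of a neighbourhood of $\infty$ provides a ``repelling collar'' that no periodic orbit can cross under sufficiently small perturbation, pinning down the behaviour of the motion near the essential singularity.
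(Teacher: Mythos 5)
Your overall route is the same as the paper's: show that hyperbolicity is open in the family (so that no nearby map has a parabolic cycle), continue the repelling periodic points holomorphically in $\lambda$, and pass to their closure, i.e.\ to the Julia set, by the $\lambda$-lemma of Ma\~n\'e--Sad--Sullivan. The openness step is fine (the paper phrases it via a compact set $K$ with $f(K)\cup S(f)\subset\interior(K)$, you via persistence of the attracting cycles together with $S(f_\lambda)=\psi_\lambda(S(f))$; these amount to the same thing), as are the standard points about non-collision of periodic points and the propagation of the conjugacy relation from the dense set of repelling periodic points to all of $J(f_{\lambda_0})$; S\l odkowski's theorem is not needed, since the Ma\~n\'e--Sad--Sullivan lemma already moves the closure.

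The gap lies in the step you yourself single out as the main obstacle. The claim that the uniform expansion near infinity (Theorem~\ref{thm:hyperbolicity} together with \eqref{eqn:eremenkolyubichexpansion}) provides a ``repelling collar that no periodic orbit can cross'' is not correct: for a hyperbolic entire function the Julia set is unbounded and repelling periodic points are dense in it, so periodic orbits enter every neighbourhood of infinity, and expansion creates no spatial barrier. What must actually be excluded is a \emph{transcendental} singularity of the continuation in the parameter: a branch $\lambda\mapsto z(\lambda)$ of the solutions of $f_\lambda^n(z)=z$ could a priori tend to infinity as $\lambda$ approaches some finite $\lambda_1$ in the neighbourhood, and this is a statement about parameter dependence that neither expansion estimate addresses. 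This is exactly the point the paper treats separately: it invokes the argument of \cite[Section~4]{alexmisha}, which shows (for maps with finite singular sets, but the argument carries over to quasiconformal equivalence classes) that the analytic continuation of a repelling periodic point encounters only algebraic singularities, or, alternatively, the result of \cite{boettcher} that the set of points whose orbits remain sufficiently large moves holomorphically. Without one of these inputs, or a genuine replacement for them, your holomorphic motion of the repelling periodic points is not justified over a full neighbourhood of $\lambda_0$.
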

\begin{proof} 
Let us first observe that hyperbolicity is an open property in any such family.
     Indeed, \ref{item:singularattracting} in Theorem~\ref{thmdef:hyperbolicity} is equivalent to the existence of a compact set $K$ with
     $f(K)\cup S(f)\subset \interior(K)$; see \cite[Proposition 2.1]{walternurialasse}. 
     Clearly, any function $\tilde{f}$ that is sufficiently close to $f$ in the sense of locally
     uniform convergence satisfies $\tilde{f}(K)~\subset~\interior(K)$. If, furthermore, $S(\tilde{f})$ is sufficiently close to $S(f)$ in the Hausdorff metric,
     then $S(\tilde{f})~\subset~\interior(K)$, and hence $\tilde{f}$ is also hyperbolic.

Therefore, if $f$ belongs to any analytic family as in the statement of the theorem, 
     then $f$ has a neighbourhood in which all maps are hyperbolic. In particular, no function in this neighbourhood has any parabolic cycles,
     and it follows that the repelling periodic points of $f$ move holomorphically over this neighbourhood. By the ``$\lambda$-lemma'' \cite{manesadsullivan},
     it follows that the closure of the set of repelling periodic points of $f$, i.e. the Julia set, also moves holomorphically. This yields the desired result.

We note that this argument uses the fact that the analytic continuation
     of a repelling periodic point encounters only algebraic singularities within a family as above. This is proved in \cite[Section~4]{alexmisha} for
     maps with finite singular sets; the same argument applies in our setting. Alternatively, the claim can also be deduced formally from the results of
     \cite{boettcher}, where it is shown that, for a given compact subset of the quasiconformal equivalence class, the set of points whose orbits remain
     sufficiently large moves holomorphically.
\end{proof}

%
%
%
%
\newcommand{\V}{\mathcal{V}}
\section{Metrics decaying at most polynomially}
\begin{proof}[Proof of Theorem~\ref{thm:strong}]
\label{sec:strong}
 Let $f$ be a transcendental entire or meromorphic function, let $s\in S(f)$ be a finite singular value of $f$,
 and let $U$ be an open neighbourhood of $s$.
   We may assume that there is a disc $D\subset U$ around $s$ that contains
   no critical values of $f$, as otherwise there is nothing to prove. 

 Let $K$ be any positive integer. By Corollary~\ref{cor:disjointtracts}, we can find $K$ 
  discs of univalence $D_1,\dots,D_K\subset D$, having
   pairwise disjoint tracts $G_1,\dots,G_K$. Let $a_1,\dots a_K$ be the associated asymptotic values. Also, for $1\leq n \leq K$, let
   $\Gamma_n$ be the preimage in $G_n$ of the radius of $D_n$ ending at $a_n$. 

  Then $\Gamma_n$ is an asymptotic curve for $a_n$,
   $f(\Gamma_n)$ is a straight line segment, and $f(z)\to a_n$ as $z\to\infty$ in $\Gamma_n$. This is precisely the
   setting of the proof of \cite[Theorem~1]{walteralexsingularities}, and formula (12) in that paper shows that there exist an integer $n$
   and a sequence $(w_j)$ of points tending to infinity on $\Gamma_n$ such that 
    \[ |f'(w_j)|\leq |w_j|^{-2p-1}, \]
   where $p$ is any positive integer with $4p+3<K$. (In \cite{walteralexsingularities}, the function $f$ is required to have order less
    than $p-3$, and the values $a_j$ are assumed to be pairwise distinct. However, neither of these assumptions are required for
    the proof of formula (12).) Since $K$ was arbitrary, the claim of the theorem follows. 

  The argument in \cite{walteralexsingularities} is essentially the same as in the proof of the classical Denjoy-Carleman-Ahlfors theorem: 
    since the tracts $G_n$ are unbounded and pairwise disjoint, some of them must have a small average opening angle. By the
    Ahlfors distortion theorem, it follows that $f$ must approach $a_n$ rapidly along $\Gamma_n$, which is only possible if the
    derivative becomes quite small along this curve. 
 
  For the reader's convenience, we present a self-contained proof of Theorem~\ref{thm:strong}, following the same idea. 
   Since we are not interested in precise estimates, we replace the use of the Ahlfors distortion theorem by 
   the standard estimate on the hyperbolic metric in a simply-connected domain. 
    Let $R_0>1$ be sufficiently large to ensure that each $\Gamma_n$ contains a point of modulus $R_0$. For each $n$,
    and each $z\in \Gamma_n$, let us denote by $\Gamma_n^+(z)$ the piece of $\Gamma_n$ connecting $z$ to $\infty$, 
    and the complementary bounded piece by $\Gamma_n^-(z)$.

Suppose 
    that the conclusion of the theorem did not hold. Then 
      \[ |f'(z)|\geq |z|^{-\tau}, \]
   for some $\tau>1$ and all $z\in \bigcup \Gamma_n$. This implies 
    \begin{equation}\label{eqn:approachinga}
        |f(z)-a_n| = \ell(f(\Gamma_n^+(z)))=
         \int_{\Gamma_n^+(z)} |f'(\zeta )|\modd{\zeta} \geq \int_{|z|}^{\infty} x^{-\tau}\dif x = \frac{|z|^{-(\tau-1)}}{\tau-1}.
    \end{equation} 

  We now prove that, if $K$ was chosen large enough, depending on $\tau$, then such an estimate cannot hold for all
    $\Gamma_n$. Indeed, for $x \geq R_0$, let 
   $\theta_n(x)$ denote the angular measure of the set 
      $\{\theta\colon xe^{i\theta}\in G_n\}$. Since the $G_n$ are disjoint, we have
\begin{equation}
\label{eqn:theta}
   \sum_{n=1}^K \theta_n(x) \leq 2\pi. 
\end{equation}
  We are interested in the reciprocals $1/\theta_n(x)$, since these allow us to estimate the density of the
    hyperbolic metric in $G_m$. Indeed, for $|z|\geq R_0$, it follows from the fact that $G_m$ is simply connected and \cite[Theorem I.4.3]{carlesonandgamelin} that
    \[ \rho_{G_m}(z) \geq \frac{1}{2\dist(z,\partial G_n)} \geq \frac{1}{2 |z|\theta_n}. \]
   By~\eqref{eqn:theta} and the Cauchy-Schwarz inequality, we have
    \[ \sum_{n=1}^K \frac{1}{\theta_n(x)} \geq 
       \frac{\left(\sum_{n=1}^K \frac{1}{\theta_n(x)}\right) \cdot 
            \left(\sum_{n=1}^K \theta_n(x)\right)}{2\pi} \geq
         \frac{\left(\sum_{n=1}^K 1\right)^2}{2\pi} = \frac{K^2}{2\pi}. \] 
   
  Let $x\geq R_0$ and, 
    for each $n$, choose a point $z_{n}\in \Gamma_n$ with $|z_n|=x$. Then the total hyperbolic length
   of the pieces $\Gamma_n^-(z_n)$ satisfies
   \begin{align*} \sum_{n=1}^K \ell_{G_n}(\Gamma_n^-(z_n)) &=
       \sum_{n=1}^K \int_{\Gamma_n^-} \rho_{G_n}(\zeta)\modd{\zeta} \\ &\geq
       \sum_{n=1}^K \int_{R_0}^x \frac{1}{2x\cdot \theta_n(x)} \dif x \geq 
       \int_{R_0}^x \frac{K^2}{4\pi x} \dif x = \frac{K^2}{4\pi}\cdot (\log x - \log R_0). \end{align*}
   Thus there must be a choice of $n$ and a sequence $w_j\to\infty$ in $\Gamma_n$ such that 
    \[ \ell_{G_n}(\Gamma_n^-(w_j)) \geq \frac{K}{4\pi} \log(|w_j|) + O(1) \]
     as $j\to\infty$. 
  Now $f\colon G_n\to D_n$ is a conformal isomorphism. Since $f(\Gamma_n^-(w_j))$ is a radial segment connecting the centre of $D_n$ to $f(w_j)$, we deduce that
    \[ \ell_{G_n}(\Gamma_n^-(w_j)) = \ell_{D_n}(f(\Gamma_n^-(w_j))) = \log\frac{1}{|f(w_j)-a_n|} + O(1).   \] 
   
    Hence we see that~\eqref{eqn:approachinga} cannot hold for $\tau < 1 + K/(4\pi)$. Since $K$ was arbitrary, this completes the proof. 
\end{proof}

\begin{rmk}
  We could have formulated a more general version of Theorem~\ref{thm:strong}, where $f$ is an analytic function between
   Riemann surfaces $X$ and $Y$, and $\rho$ is a conformal metric on $X$ that is complete except at finitely many
   punctures of $X$, where the metric is allowed to decay at most polynomially. 
\end{rmk}
%
%
%
%
\section{The Eremenko-Lyubich class on a hyperbolic surface}
\label{sec:hyperbolicsurface}

\begin{proof}[Proof of Theorem~\ref{thm:hyperbolicsurface}]
  Let $\Omega$ be a hyperbolic surface, and let $f\colon \Omega\to\C$ be analytic. First suppose that
   $S(f)$ is unbounded. For every $R>0$, we may apply Proposition~\ref{prop:contraction} to $f$, taking $s$ to be 
   an element of $S(f)$ with $|s|>R$, taking $\rho$ to be the hyperbolic metric on $\Omega$, and $\sigma$ the cylindrical metric on
    $\C$. The fact that $\eta_{\Omega}(f)=0$ follows. 

  On the other hand, suppose that $S(f)$ is bounded, and let $R_0 > \max_{s\in S(f)}|s|$. Consider the domain
   $U \defeq \{z\in \C\colon |z|>R_0\}$, and its preimage $\V\defeq f^{-1}(U)$. If $V$ is a connected component of $\V$, then
   $f\colon V\to U$ is an analytic covering map, and hence a local isometry of the corresponding hyperbolic metrics. 

  By the Schwarz lemma \cite[Theorem I.4.2] {carlesonandgamelin}, we know that $\rho_V \geq \rho_\Omega$.
  On the other hand, up to a normalisation factor, the density of
   the hyperbolic metric of $U$ is given by \cite[Example 9.10]{HaymanSF2}
   \[ \rho_{U}(z) = \frac{1}{|z|\cdot (\log |z| - \log R)} = O\left(\frac{1}{|z| \log|z|}\right) \]
    as $z\to\infty$. Hence,  the derivative of $f$, measured with respect to the hyperbolic metric on $\Omega$ and the cylindrical
    metric on $\C$, satisfies
    \[ \| \Deriv f(z) \| \geq  \frac{|f'(z)|}{|f(z)|\cdot \rho_V(z)} = \frac{1}{|f(z)|\cdot\rho_U(f(z))} \to \infty \]
    as $|f(z)|\to\infty$ (where the second term should be understood in a local coordinate for $\Omega$ near $z$). 
    Hence $\eta_{\Omega}(f)=\infty$, as claimed. 
\end{proof}
%
%
%
%
\section*{Appendix A: Definitions of hyperbolicity}
   As mentioned in the introduction, the first notion of ``expanding'' entire functions, which coincides with
    our notion of hyperbolicity in Definition~\ref{thmdef:hyperbolicity}, goes back
    to McMullen \cite{mcmullenarea}. As also mentioned, Rippon and Stallard \cite{ripponstallardhyperbolic}
    discuss hyperbolicity in the transcendental setting, arguing that functions in the class $\B$ 
    satisfying~\ref{item:singularattracting} in Theorem~\ref{thmdef:hyperbolicity} 
    deserve to be called hyperbolic.      
    However, they left open the possibility that other functions might also be classed as ``hyperbolic''.

  Mayer and Urbanski \cite{mayerurbanskitdformalism} gave a definition of hyperbolicity that relies on the Euclidean metric. 
   Specifically,   they require both uniform expansion on the Julia set and that the postsingular set be a definite (Euclidean) distance away from the Julia set.
   Without additional requirements, this includes such functions as our examples in the introduction, 
   which exhibit ``non-hyperbolic'' phenomena and are not stable under simple perturbations; see Appendix B. 

  With an additional strong regularity assumption near the Julia set, Mayer and Urbanski obtained strong and striking results 
    concerning the measurable dynamics of ``hyperbolic'' functions in the above sense; further properties of
    these functions
are described in \cite{Badenska}. We are, however, 
   not aware of any examples outside of the class $\B$ where these assumptions are known to hold. On the other hand, if $f\in\B$ is hyperbolic
   in the sense of Mayer and Urbanski, then it is also hyperbolic in the sense of Theorem~\ref{thmdef:hyperbolicity}.

 All of
   these definitions are in fact formulated, more generally, for transcendental \emph{meromorphic} functions. In this setting, there is 
   a third, and strongest, notion of hyperbolicity, studied by Zheng \cite{Zhenghyperbolic}. This requires, in addition,  that infinity is not a singular value, and
    hence can never be satisfied when $f$ is transcendental entire. 
    An example of a function with this property is $f(z) = \lambda \tan z$, for $\lambda \in (0, 1)$.
   (We note that Zheng also studied the definitions given by Rippon and Stallard and by Mayer and Urbanski; we refer to his paper for
     further details.)  

   Zheng's definition of ``hyperbolicity on the Riemann sphere'' may be considered to correspond 
    most closely to the case of hyperbolic rational functions. In particular, Zheng shows that a meromorphic function is hyperbolic in this
    sense if and only if it satisfies a certain uniform expansion property with respect to the spherical metric. 

 Our results also apply in the setting of meromorphic functions, with appropriate modifications of definitions to correctly handle prepoles. Once 
    again, they indicate that the notion of hyperbolicity does not make sense outside of the class $\B$. 

We recall also our earlier comment that, although our applications are to meromorphic functions on subsets of the complex plane, our basic results regarding the 
    properties of singularities are given for analytic functions between Riemann surfaces. A generalisation of  the study of transcendental dynamics into this 
   setting 
    is possible for the classes of ``finite type maps'', and more generally ``Ahlfors islands maps'', suggested by Epstein \cite{adamthesis,epsteinoudkerk}.
   If $W$ is a Riemann surface and $X$ is a compact Riemann surface, then an analytic function $f\colon W\to X$ is called a \emph{finite-type map} if $S(f)$ is finite and $f$ has no removable singularities at any punctures of $W$. In the case where $W\subset X$,
    Epstein develops an iteration theory that carries over the basic results from the theory of rational dynamics and of entire and meromorphic functions with finitely many
    singular values. 

  Similarly to the case of meromorphic functions, we could again consider two different notions of hyperbolicity in this setting: 
    one, analogous to that of Zheng,
    where all singular values must lie in attracting basins or map outside of the closure of $W$; and a weaker notion, in analogy to that of Rippon and Stallard, 
    where we allow singular values to lie on (or map into) the boundary of the
   domain of definition. All the standard results for the case of meromorphic functions should extend to this setting also. 

  The larger class of \emph{Ahlfors islands maps} includes all transcendental meromorphic functions, as well as all finite type maps. It is tempting to
    define an ``Eremenko-Lyubich class'' of such maps, consisting of those for which $S(f)\cap W$ is a compact subset of the domain of definition $W$. 
    Our results still apply in this setting, and imply that hyperbolicity is only to be found within this class. However, it is no longer clear that functions
    \emph{within} this class have suitable expansion properties near the boundary, and hence dichotomies such as that of Theorem A break down. 
    Finding a natural class that extends both the Eremenko-Lyubich class of entire functions and all finite type maps appears to be an interesting problem.
%
%
%
%
\section*{Appendix B: Expansion near the Julia set for non-hyperbolic functions}

 In this section, we briefly discuss the three functions 
   \[ f_1(z) = z + 1 + e^{-z}, \quad f_2(z) = z - 1 + e^{-z} \quad\text{and}\quad f_3(z) = z - 1 + 2\pi i + e^{-z} \]
   mentioned in the introduction. All three are well-studied, and have properties that are not compatible with what would normally 
    be considered hyperbolic behaviour. 
 \begin{propB1}[Dynamical properties of the functions $f_p$]
 \leavevmode\vspace{-\baselineskip}
 \begin{enumerate}[(a)]
   \item $f_1$ has a Baker domain containing the right half-plane $H \defeq \{ z \colon \operatorname{Re }(z) > 0\}$. That is, 
    $f_1(H)\subset H$, and $f^n(z)\to\infty$ for all $z\in H$;

   \item   $f_2$ has infinitely many superattracting fixed points, $z_n \defeq 2\pi i n$;

   \item   $f_3$ has $J(f_3)=J(f_2)$, and possesses an orbit of wandering domains. 

 \end{enumerate}
 \end{propB1}
\begin{proof}
  The function $f_1$ was first studied by Fatou. The stated property is well-known and can be verified by an elementary calculation.

  The function $f_2$ is precisely Newton's method for finding the points where $e^z=1$; it was studied in detail 
    by Weinreich \cite{weinreichthesis}. Clearly it follows directly from the definition that the points $z_n$ are
    indeed superattracting fixed points. 

   The function $f_3$ is a well-known example of a transcendental entire functions with wandering domains, first described by Herman;
      see \cite[Example~2~on p.\ 564]{bakerwanderinglms} and \cite[Section 4.5]{waltersurvey}.
The fact that $J(f_3)=J(f_2)$
    follows from the relations $f_2(z + 2\pi i) = f_2(z) + 2\pi i$ and $f_3(z) = f_2(z) + 2\pi i$. Since the points $z_n$ all belong to different
    Fatou components for $f_2$, the same is true for $f_3$. Since $f_3(z_n)= z_{n+1}$, they do indeed belong to an orbit of wandering domains 
    for $f_3$.
\end{proof}

We now justify the claim, made in the introduction, that these functions are expanding on a complex neighbourhood of the Julia set. 

\begin{propB2}[Expansion properties of the functions $f_p$]
  For each $p\in\{1,2,3\}$, there is an open neighbourhood $U$ of $J(f_p)$ in $\C$ with $f^{-1}(U)\subset U$ and a conformal metric $\rho$ on $U$ such that $f_p$ is 
    uniformly expanding
    with respect to $\rho$.
\end{propB2}
\begin{proof}
  Writing $w=-e^{-z}$, the function $f_1$ is semi-conjugate to 
    \[ F_1\colon \C\to\C; \quad w\mapsto \frac{1}{e} \cdot w \cdot e^w. \]
   This is a hyperbolic entire function, since its unique asymptotic value $0$ is an attracting fixed point, and its 
    unique critical point $-1$ belongs to the basin of attraction of this fixed point. By \cite[Lemma 5.1]{boettcher}, the function $F_1$ is expanding
    on a neighbourhood of its Julia set, with respect to a suitable hyperbolic metric. Pulling back this metric under the semiconjugacy, we obtain the 
    desired property for $f_1$.

   (We remark that, alternatively, one can show directly that $f_1$ is expanding with respect to the Euclidean metric, when restricted to a 
     suitable neighbourhood of the Julia set.) 

  The argument for $f_2$ and $f_3$ is analogous. Both functions are semi-conjugate to the map 
    \[ F_2\colon \C\to\C ;  \quad w\mapsto e\cdot w \cdot e^w.\]
     This function is not hyperbolic, as the asymptotic value $0$ is a repelling fixed point. However, we note that this asymptotic value does not correspond to any
      point in the $z$-plane under the semiconjugacy. One can hence think of $F_2$ as being hyperbolic as a self-map of 
     $\C^*=\C\setminus \{0\}$, since the critical point $-1$ is a superattracting fixed point. The same proof as in \cite{boettcher} 
    yields a neighbourhood $\tilde{U}$ of $J(F_2)\setminus\{0\}$ and a conformal metric on $U$ such that $F_2$ is expanding, and the claim follows. 
\end{proof}

 \begin{figure}
  \subfloat[$\lambda=1+0.001i$]{\includegraphics[width=.45\textwidth]{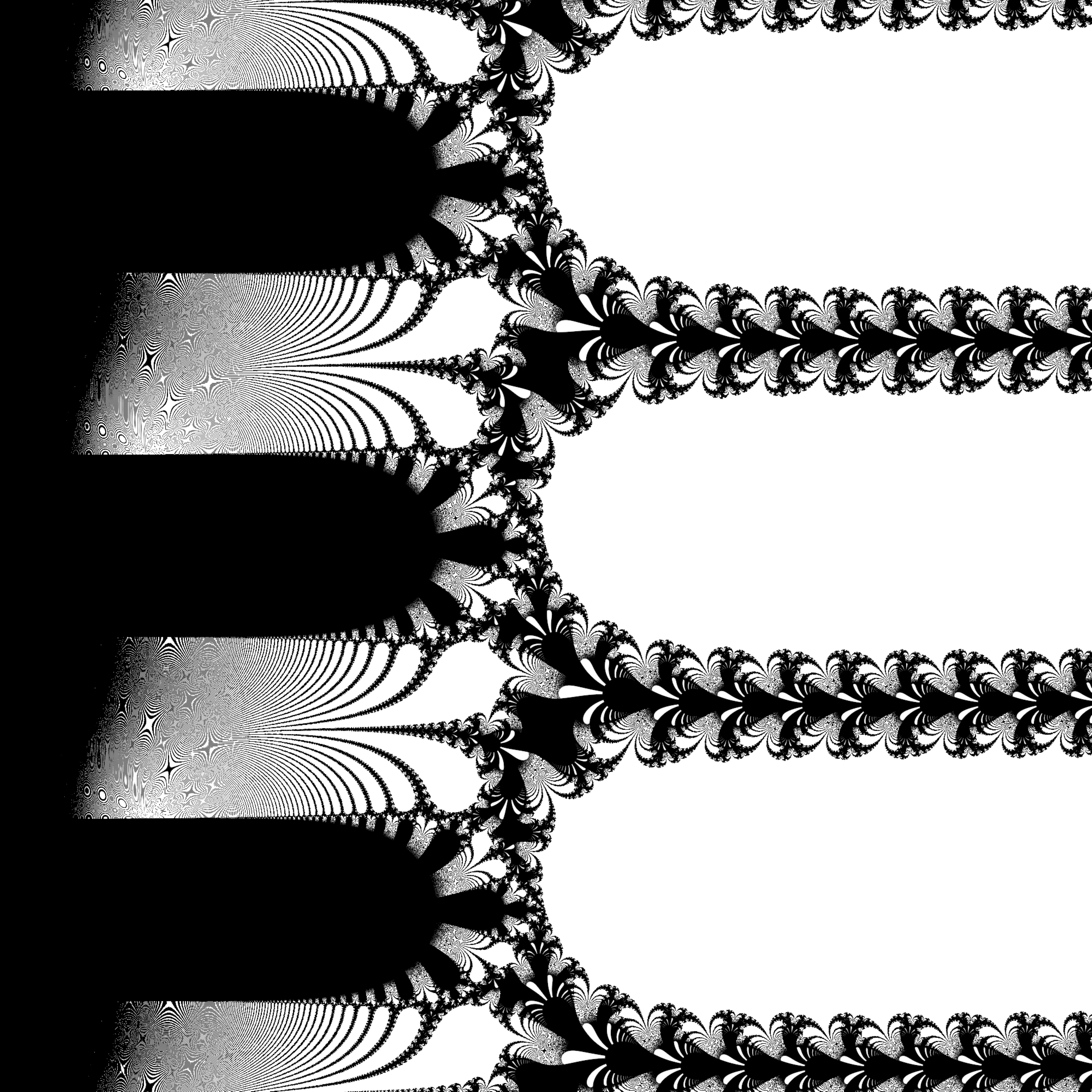}}\hfill
  \subfloat[$\lambda\approx 1.00025 + 0.00171i$]{\includegraphics[width=.45\textwidth]{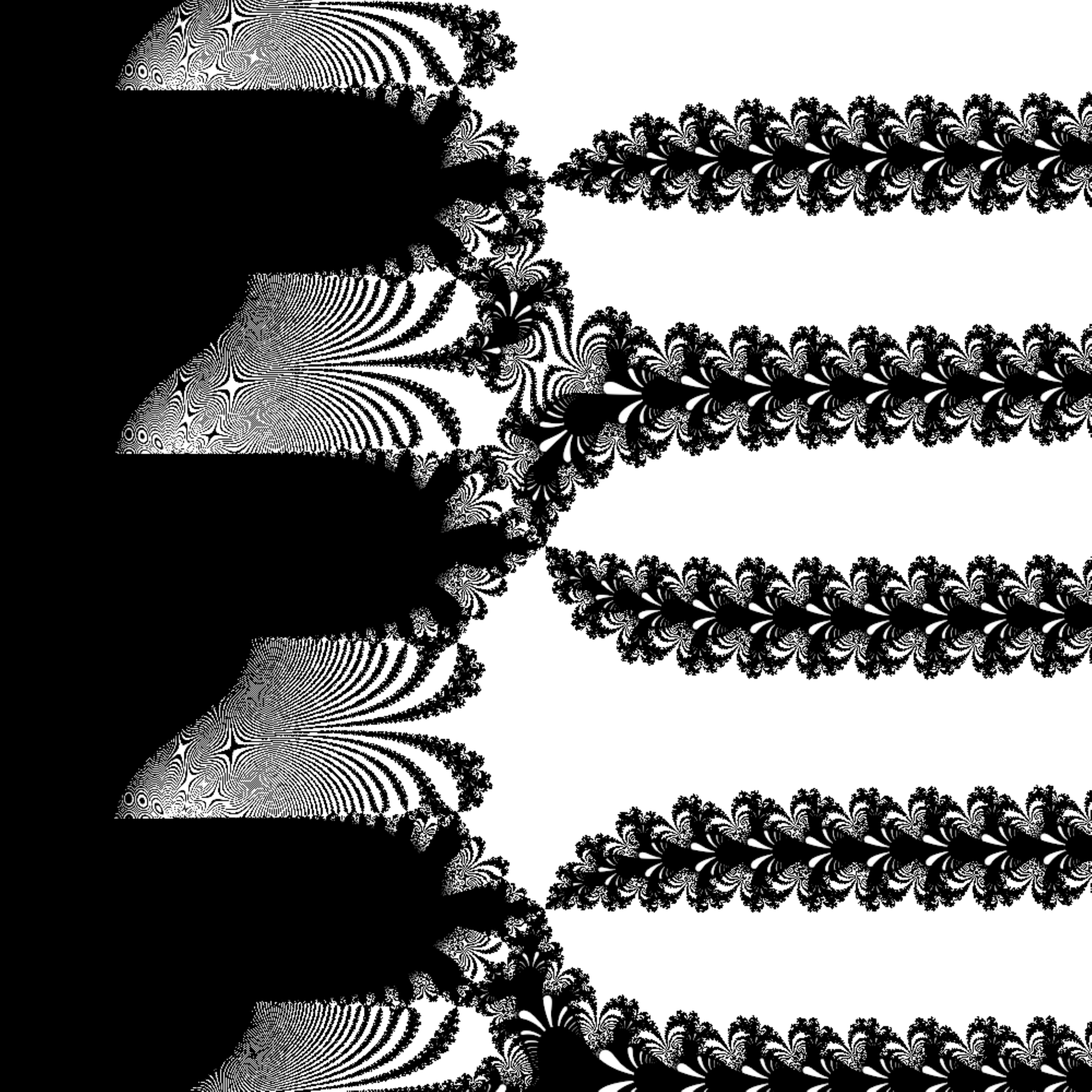}}
   \caption{\label{fig:perturbation}Instability of $f_1$, illustrating Proposition B.3. 
     Shown are the Julia sets (in black) of $J(\lambda f_1)$ for two parameter values close to $\lambda=1$, illustrating
      Proposition B.3. The second parameter is chosen so that $f_{\lambda}^2(z_{1000})=\phi(\lambda)$, as in the proof of the proposition.}
 \end{figure}

 \begin{propB3}[Instability of the functions $f_p$]
    Let $p\in \{1,2,3\}$. Then $f_p$ is not stable in the family $(\lambda f_p)_{\lambda\in\C}$;
      more precisely, there exist values of $\lambda$ arbitrarily close to $1$ such that $\lambda f_p$ and $f_p$ are not topologically conjugate on their
       Julia sets.
  \end{propB3}
  
  \begin{proof}
We prove only the case $p=1$; the proofs in the other cases are very similar. For simplicity, we write $f\defeq f_1$ and 
    $f_{\lambda} \defeq \lambda f$. 
Consider the critical points of $f_\lambda$, $z_n \defeq 2n\pi i$. Observe that $z_n \in F(f)$ for all $n$. We claim that there are values of $\lambda$ arbitrarily close to $1$ such that, for some sufficiently large value of $n$, we have $z_n \in J(f_\lambda)$. In this case $J(f)$ and $J(f_{\lambda})$ are 
 not topologically conjugate on their Julia sets, since such a conjugacy must preserve the local degree of $f$. 

In order to prove this claim, observe that we can analytically continue the repelling fixed point $i\pi$ of $f$ as a solution of the equation
     $f_{\lambda}(z)=z$ in a neighbourhood of $\lambda=1$, by the implicit function theorem. That is, if $\delta\in (0,1)$ is sufficiently small, then there is 
     an analytic function $\phi\colon B(1,\delta) \to B(0,2\pi)$ such that $\phi(\lambda)$ is a repelling fixed point of $f_{\lambda}$ for all 
     $\lambda\in B(1,\delta)$.

It can be seen that there are infinitely many zeros of $f$, and that we can number a subsequence of them, $(\xi_m)_{m\in\N}$, so that $\operatorname{Im }(\xi_m) \rightarrow \infty$ and $\operatorname{Re }(\xi_m) \sim -\log(\operatorname{Im }(\xi_m))$ as $m\rightarrow\infty$. By a further calculation, 
  there exists $r>0$ such that, for sufficiently large values of $m$,  $f$ (and hence $f_\lambda$) is univalent in $B(\xi_m, r)$. 
    Since $|f_\lambda'(\xi_m)|\rightarrow\infty$ as $n\rightarrow\infty$, uniformly for $\lambda \in B(1,\delta)$, we deduce by the Koebe quarter theorem that 
\begin{equation}
\label{balliscontained}
B(0,10) \subset f_\lambda(B(\xi_m, r)),
\end{equation}
  for all sufficiently large $m$, whenever $\lambda\in B(1, \delta)$.

Suppose that $n$ is large. Observe that the mapping $\psi_1$ which takes $\lambda$ to $f_\lambda(z_n)$ maps $B(1, \delta)$ to $B(2+z_n, \delta|2+z_n|)$. Hence, if $m$ is sufficiently large, then $n$ can be chosen such that $ B(\xi_m, r) \subset \psi_1(B(1, \delta)),$ and also $|\psi_1(\lambda) - \xi_m| \geq 2r$ whenever $\lambda \in \partial B(1, \delta)$. Let $\alpha_\lambda$ be the branch of $f_\lambda^{-1}$ which maps $B(0,10)$ to $B(\xi_m, r)$.

Suppose that $\zeta \in B(0, 10)$, and let $\alpha\colon B(1, \delta)\to B(\xi_m, r); \lambda\mapsto \alpha_\lambda(\zeta)$. We deduce by 
   Rouch\'{e}'s theorem that there exists $\lambda \in B(1, \delta)$ such that $\psi_1(\lambda) - \alpha(\lambda)=0$, which is equivalent to $f_\lambda^2(z_n) = \zeta$.

Now consider the mapping $\psi_2\colon B(1, \delta) \to \mathbb{C}; \lambda\mapsto f_{\lambda}^2(z_n)$. Provided that $n$ is sufficiently large, 
   it follows by the argument above that $B(0, 10) \subset \psi_2(B(1, \delta))$.

Let $K \subset B(1, \delta)$ be a component of $\psi_2^{-1}(B(0, 10))$. Then $$|\phi(\lambda)| \leq 2\pi < 10 = |\psi_2(\lambda)|,$$ whenever $\lambda \in \partial K$. Hence we may apply Rouch{\'e}'s theorem again, and deduce that there exists $\lambda_0 \in B(1, \delta)$ such that $\psi_2(\lambda_0) - \phi(\lambda_0) = 0$. In other words $f^2_{\lambda_0}(z_n)$ is a repelling fixed point of $f_{\lambda_0}$ and so lies in $J(f_{\lambda_0})$. This completes the proof of our claim.
       \end{proof} 

\newcommand{\etalchar}[1]{$^{#1}$}
\def\cprime{$'$}
\providecommand{\bysame}{\leavevmode\hbox to3em{\hrulefill}\thinspace}

\end{document}